\numberwithin{equation}{section}
\def\sgn{\operatorname{sgn}}
\def\wt{\operatorname{wt}}
\newtheorem{thm}{Theorem}[section]
\newtheorem{prop}[thm]{Proposition}
\newtheorem{lem}[thm]{Lemma}
\newtheorem{cor}[thm]{Corollary}
\theoremstyle{definition}
\newtheorem{rem}[thm]{Remark}
\newtheorem{defin}[thm]{Definition}
\newtheorem{ex}[thm]{Example}
\begin{document}
\title{The Pentagram Map and $Y$-patterns}
\author{Max Glick}
\address{Department of Mathematics, University of Michigan,
Ann Arbor, MI 48109, USA} \email{maxglick@umich.edu}

\subjclass[2010]{
13F60, %   Cluster algebras
05A15, %   Exact enumeration problems, generating functions
51A05  %   General theory and projective  geometries 
}
\keywords{pentagram map, cluster algebra, $Y$-pattern, alternating sign matrix}

\begin{abstract}
The pentagram map, introduced by R. Schwartz, is defined by the following construction: given a polygon as input, draw all of its ``shortest'' diagonals, and output the smaller polygon which they cut out.  We employ the machinery of cluster algebras to obtain explicit formulas for the iterates of the pentagram map.
\end{abstract}

\date{\today}
\thanks{Partially supported by NSF grants DMS-0943832 and DMS-0555880.} 

\maketitle

\section{Introduction and main formula}
The pentagram map, introduced by Richard Schwartz, is a geometric construction which produces one polygon from another.  Figure \ref{figureT} gives an example of this operation.  Schwartz \cite{S} uses a collection of cross ratio coordinates to study various properties of the pentagram map.  In this paper, we work with a related set of quantities, which we term the $y$-parameters.  A polygon can be reconstructed (up to a projective transformation) from its $y$-parameters together with a few other quantities.  The other quantities transform in a very simple manner under the pentagram map, so a good understanding of the map can be gained by determining how it affects the $y$-parameters.  

\begin{figure}[ht] \label{figureT}
\begin{pspicture}(13,5)
\rput(0,-.5){
  % Original Polygon
	\pspolygon[linewidth=2pt](1,2)(1,3)(2,4.5)(3,5)(4.5,4.5)(5,3.5)(4.5,2)(3.5,1)(2,1)
  % Arrow
  \psline[linewidth=2pt]{->}(5.5,3)(7.5,3)

  % New Polygon
  \rput(7,0){
  \pspolygon(1,2)(1,3)(2,4.5)(3,5)(4.5,4.5)(5,3.5)(4.5,2)(3.5,1)(2,1)
  \pspolygon[linestyle=dashed](1,2)(2,4.5)(4.5,4.5)(4.5,2)(2,1)(1,3)(3,5)(5,3.5)(3.5,1)
  \pspolygon[linewidth=2pt](1.22,2.55)(1.67,3.67)(2.5,4.5)(3.67,4.5)(4.5,3.87)(4.5,2.67)(3.97,1.79)(2.75,1.3)(1.62,1.75)
  }
}
\end{pspicture}
\caption{The pentagram map}
\end{figure}
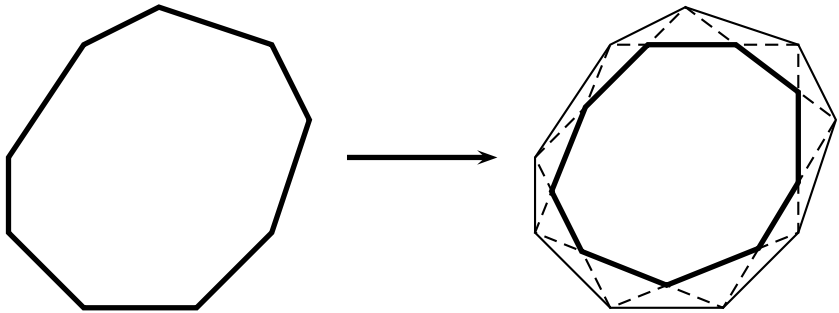

It turns out that the $y$-parameters of a polygon output by the pentagram map can be computed as simple rational functions of the $y$-parameters of the original polygon.  Moreover, these rational functions are precisely the transition equations of the $Y$-pattern associated to a certain cluster algebra.  We exploit this connection to derive formulas for the $y$-parameters of a polygon obtained by applying the pentagram map multiple times.  These formulas depend on the $F$-polynomials of the corresponding cluster algebra, which in general are defined recursively.  In this instance, a non-recursive description of these polynomials can be found.  Specifically, the $F$-polynomials are generating functions for the order ideals of a certain sequence of partially ordered sets.  These posets were originally defined by N. Elkies, G. Kuperberg, M. Larsen, and J. Propp  \cite{EKLP}.  It is clear from this description of the $F$-polynomials that they have positive coefficients, verifying that the Laurent positivity conjecture of S. Fomin and A. Zelevinsky \cite{FZ2} holds in this case.

This paper is organized as follows.  In the remainder of this section we state our main result, the formula for the $y$-parameters of the iterated pentagram map.  This formula is proven in the subsequent sections.  Section~ 2 gives the transition equations of the $y$-parameters under a single application of the pentagram map.  In Section~ 3, we explain the connection to $Y$-patterns.  This connection is used in Section~ 4 to derive our main formula in terms of the $F$-polynomials.  Section~ 4 also provides an analogous formula expressed in the original coordinate system used by Schwartz.  In Section 5 we present background on alternating sign matrices and related concepts including order ideals and the octahedron recurrence.  Section 6 contains the proof of the formula for the $F$-polynomials in terms of these order ideals.  Lastly, Section~ 7 applies the results of this paper to axis-aligned polygons, expanding on a result of Schwartz.

Schwartz \cite{S} studies the pentagram map on a class of objects called twisted polygons.  A \emph{twisted polygon} is a sequence $A=(A_i)_{i\in \mathbb{Z}}$ of points in the projective plane that is periodic modulo some projective transformation $\phi$, i.e., $A_{i+n} = \phi(A_i)$ for all $i \in \mathbb{Z}$.  Two twisted polygons $A$ and $B$ are said to be \emph{projectively equivalent} if there exists a projective transformation $\psi$ such that $\psi(A_i) = B_i$ for all $i$.  Let $\mathcal{P}_n$ denote the space of twisted $n$-gons modulo projective equivalence.  

The \emph{pentagram map}, denoted $T$, inputs a twisted polygon $A$ and constructs a new twisted polygon $T(A)$ given by the following sequence of points:
\begin{displaymath}
\ldots, \overleftrightarrow{A_{-1}A_1} \cap \overleftrightarrow{A_0A_2}, \overleftrightarrow{A_0A_2} \cap \overleftrightarrow{A_1A_3}, \overleftrightarrow{A_{1}A_3} \cap \overleftrightarrow{A_2A_4}, \ldots
\end{displaymath}
(we denote by $\overleftrightarrow{AB}$ the line passing through $A$ and $B$.)  Note that this operation is only defined for generic twisted polygons.  Specifically, the lines $\overleftrightarrow{A_{i-1}A_{i+1}}$ and $\overleftrightarrow{A_iA_{i+2}}$ must be distinct for all $i$ in order for the pentagram map to be applied.  The pentagram map preserves projective equivalence, so it is well defined for generic points of $\mathcal{P}_n$.  

A complication arises when trying to index the sequence $T(A)$.  It would be equally reasonable to assign the point $\overleftrightarrow{A_{i-1}A_{i+1}} \cap \overleftrightarrow{A_iA_{i+2}}$ the index $i$ or $i+1$.  Instead of doing either, we use $\frac{1}{2} + \mathbb{Z} = \{\ldots, -0.5,0.5,1.5,2.5\ldots\}$ to label $B = T(A)$.  Specifically, we let
\begin{displaymath}
B_i = \overleftrightarrow{A_{i-\frac{3}{2}}A_{i+\frac{1}{2}}} \cap \overleftrightarrow{A_{i-\frac{1}{2}}A_{i+\frac{3}{2}}}
\end{displaymath}
for all $i \in (\frac{1}{2} + \mathbb{Z})$.  This indexing scheme is illustrated in Figure \ref{figureTindexed}.  Similarly, if $B$ is a sequence of points indexed by $\frac{1}{2} + \mathbb{Z}$ then $T(B)$ is defined in the same way and is indexed by $\mathbb{Z}$.  Let $\mathcal{P}_n^*$ denote the space of twisted $n$-gons indexed by $\frac{1}{2} + \mathbb{Z}$, modulo projective equivalence.

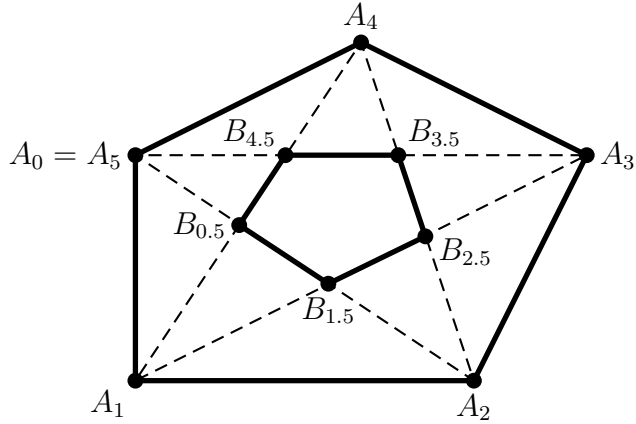
\begin{figure} 
\psset{unit=1.5cm}
\begin{pspicture}(6,4)
	\rput(0,-0.5){
  % Original Pentagon
  \pspolygon[showpoints=true,linewidth=2pt](1,1)(4,1)(5,3)(3,4)(1,3)
  \uput[225](1,1){$A_1$}
  \uput[270](4,1){$A_2$}
  \uput[0](5,3){$A_3$}
  \uput[90](3,4){$A_4$}
  \uput[l](1,3){$A_0=A_5$}
  % New Pentagon
  \pspolygon[linestyle=dashed](1,1)(5,3)(1,3)(4,1)(3,4)
  \pspolygon[showpoints=true,linewidth=2pt](1.92,2.38)(2.71,1.86)(3.57,2.28)(3.33,3)(2.33,3)
  \uput[d](2.71,1.86){$B_{1.5}$}
  \uput[330](3.57,2.28){$B_{2.5}$}
  \uput[ur](3.33,3){$B_{3.5}$}
  \uput[ul](2.33,3){$B_{4.5}$}
  \uput[l](1.92,2.38){$B_{0.5}$}
  }
\end{pspicture}
\psset{unit=1cm}
\caption{The pentagon $B = T(A)$ is indexed by $\frac{1}{2} + \mathbb{Z}$.}
\label{figureTindexed}
\end{figure}

The \emph{cross ratio} of 4 real numbers $a,b,c,d$ is defined to be 
\begin{displaymath}
\chi(a,b,c,d) = \frac{(a-b)(c-d)}{(a-c)(b-d)}.
\end{displaymath}
This definition extends to the projective line, on which it gives a projective invariant of 4 points.  We will be interested in taking the cross ratio of 4 collinear points in the projective plane, or dually, the cross ratio of 4 lines intersecting at a common point.

\begin{defin}
Let $A$ be a twisted polygon indexed either by $\mathbb{Z}$ or $\frac{1}{2} + \mathbb{Z}$.  The \emph{$y$-parameters} of $A$ are the real numbers $y_j(A)$ for $j \in \mathbb{Z}$ defined as follows.  For each index $k$ of $A$ let
\begin{align} 
y_{2k}(A) = -\left(\chi (\overleftrightarrow{A_{k}A_{k-2}}, \overleftrightarrow{A_{k}A_{k-1}}, \overleftrightarrow{A_{k}A_{k+1}}, \overleftrightarrow{A_kA_{k+2}})\right)^{-1} \label{yvertex} \\
y_{2k+1}(A)= -\chi (\overleftrightarrow{A_{k-2}A_{k-1}} \cap L, A_k, A_{k+1}, \overleftrightarrow{A_{k+2}A_{k+3}} \cap L) \label{yedge} 
\end{align}
where $L = \overleftrightarrow{A_kA_{k+1}}$.
\end{defin}

Note that the 4 lines in \eqref{yvertex} all pass through the point $A_k$, and the 4 points in \eqref{yedge} all lie on the line $L$.  Therefore the cross ratios are defined.  These cross ratios are illustrated in Figure \ref{figureyA}.

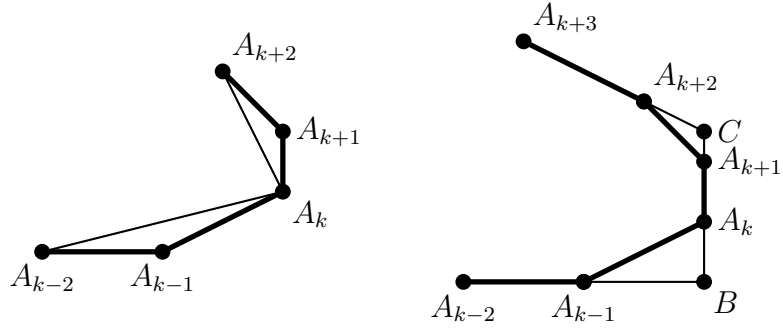
\begin{figure}
\psset{unit=.8cm}
\begin{pspicture}(12,6)
\rput(0,.5){
\psline[showpoints=true,linewidth=2pt](1,1)(3,1)(5,2)(5,3)(4,4)
\uput[d](1,1){$A_{k-2}$}
\uput[d](3,1){$A_{k-1}$}
\uput[dr](5,2){$A_k$}
\uput[r](5,3){$A_{k+1}$}
\uput[ur](4,4){$A_{k+2}$}
\psline(1,1)(5,2)(4,4)
}
\rput(7,0){
\psline[showpoints=true,linewidth=2pt](1,1)(3,1)(5,2)(5,3)(4,4)(2,5)
\uput[d](1,1){$A_{k-2}$}
\uput[d](3,1){$A_{k-1}$}
\uput[r](5,2){$A_k$}
\uput[r](5,3){$A_{k+1}$}
\uput[ur](4,4){$A_{k+2}$}
\uput[ur](2,5){$A_{k+3}$}
\psline[showpoints=true,dotsize=2pt 5](3,1)(5,1)(5,3.5)(4,4)
\uput[dr](5,1){$B$}
\uput[r](5,3.5){$C$}
}
\end{pspicture}
\psset{unit=1cm}
\caption{The cross ratios corresponding to the $y$-parameters.  On the left, $-(y_{2k}(A))^{-1}$ is the cross ratio of the 4 lines through $A_k$.  On the right, $y_{2k+1}(A)= -\chi(B,A_k,A_{k+1}, C)$.} \label{figureyA}
\end{figure}

As will be demonstrated, each $y$-parameter of $T(A)$ can be expressed as a rational function of the $y$-parameters of $A$.  It follows that each iterate of $T$ corresponds to a rational map of the $y$-parameters.  Our formulas for these maps involve the $F$-polynomials of a particular cluster algebra.  These can in turn be expressed in terms of certain posets which we define now.

The original definition of the posets, given by Elkies, Kuperberg, Larsen, and Propp \cite{EKLP}, involves height functions of domino tilings.  Although we will use this characterization later, the following self-contained definition suffices for now.  Let $Q_k$ be the set of triples $(r,s,t)\in \mathbb{Z}^3$ such that
\begin{displaymath}
2|s|-(k-2) \leq t \leq  k-2-2|r|
\end{displaymath}
and
\begin{displaymath}
2|s|-(k-2) \equiv t \equiv  k-2-2|r| \pmod{4}.
\end{displaymath}
Let $P_k = Q_{k+1} \cup Q_k$.  Define a partial order on $P_k$ by saying that $(r',s',t')$ covers $(r,s,t)$  if and only if $t' = t + 1$ and $|r'-r| + |s'-s| = 1$.  The Hasse diagrams of $P_2$ and $P_3$ are given in Figure \ref{figureP2}.  

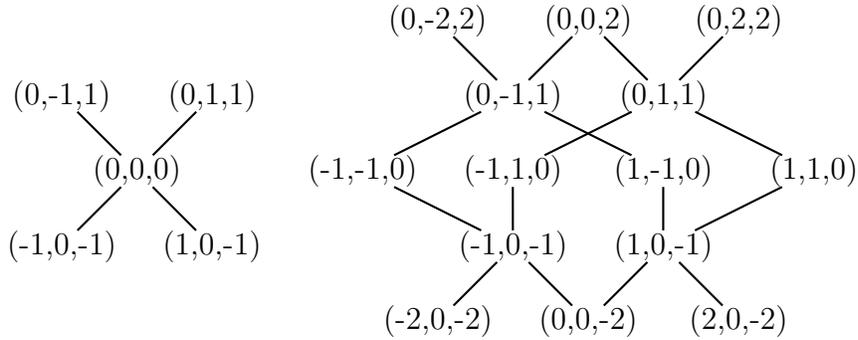
\begin{figure}
\begin{pspicture}(11,6)
\rput(1,4){\rnode{v1}{(0,-1,1)}}
\rput(3,4){\rnode{v2}{(0,1,1)}}
\rput(2,3){\rnode{v3}{(0,0,0)}}
\rput(1,2){\rnode{v4}{(-1,0,-1)}}
\rput(3,2){\rnode{v5}{(1,0,-1)}}
\ncline{v4}{v3}
\ncline{v5}{v3}
\ncline{v3}{v1}
\ncline{v3}{v2}

\rput(4,0){
\rput(2,5){\rnode{w1}{(0,-2,2)}}
\rput(4,5){\rnode{w2}{(0,0,2)}}
\rput(6,5){\rnode{w3}{(0,2,2)}}
\rput(3,4){\rnode{w4}{(0,-1,1)}}
\rput(5,4){\rnode{w5}{(0,1,1)}}
\rput(1,3){\rnode{w6}{(-1,-1,0)}}
\rput(3,3){\rnode{w7}{(-1,1,0)}}
\rput(5,3){\rnode{w8}{(1,-1,0)}}
\rput(7,3){\rnode{w9}{(1,1,0)}}
\rput(3,2){\rnode{w10}{(-1,0,-1)}}
\rput(5,2){\rnode{w11}{(1,0,-1)}}
\rput(2,1){\rnode{w12}{(-2,0,-2)}}
\rput(4,1){\rnode{w13}{(0,0,-2)}}
\rput(6,1){\rnode{w14}{(2,0,-2)}}

\ncline{w12}{w10}
\ncline{w13}{w10}
\ncline{w13}{w11}
\ncline{w14}{w11}
\ncline{w10}{w6}
\ncline{w10}{w7}
\ncline{w11}{w8}
\ncline{w11}{w9}
\ncline{w6}{w4}
\ncline{w7}{w5}
\ncline{w8}{w4}
\ncline{w9}{w5}
\ncline{w4}{w1}
\ncline{w4}{w2}
\ncline{w5}{w2}
\ncline{w5}{w3}
}
\end{pspicture}
\caption{The poset $P_k$ for $k=2$ (left) and $k=3$ (right)}
\label{figureP2}
\end{figure}

We denote by $J(P_k)$ the set of \emph{order ideals} in $P_k$, i.e., subsets $I \subseteq P_k$ such that $x \in I$ and $y < x$ implies $y \in I$.

\begin{thm} \label{thmTk}
Let $A \in \mathcal{P}_n$ and let $y_j = y_j(A)$ for all $j \in \mathbb{Z}$.  If $k \geq 1$ then the $y$-parameters of $T^k(A)$ are given by 
\begin{equation} \label{Tk}
y_j(T^k(A)) = \begin{cases}
                 \left(\displaystyle\prod_{i=-k}^k y_{j+3i}\right)\dfrac{F_{j-1,k}F_{j+1,k}}{F_{j-3,k}F_{j+3,k}}, & j+k \textrm{ even} \\
                 \left(\displaystyle\prod_{i=-k+1}^{k-1} y_{j+3i}\right)^{-1} \dfrac{F_{j-3,k-1}F_{j+3,k-1}}{F_{j-1,k-1}F_{j+1,k-1}}, & j+k \textrm{ odd} \\
              \end{cases}
\end{equation} 
where 
\begin{equation} \label{Fjk}
F_{j,k} = \sum_{I \in J(P_k)} \prod_{(r,s,t)\in I} y_{3r+s+j}.
\end{equation}
%\label{Tkodd}
\end{thm}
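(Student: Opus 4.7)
The plan is to convert the geometric problem into algebra via cluster theory, and then verify the combinatorial formula through a recursion argument. The argument splits into three parts, carried out in the order suggested by the paper's section structure.

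First, I would derive the transition equations for a single step of the pentagram map, expressing each $y_j(T(A))$ explicitly as a rational function of the $y$-parameters of $A$. This is a direct projective computation: the vertices of $T(A)$ are intersections of short diagonals of $A$, so the cross-ratios defining the $y$-parameters of $T(A)$ can be rewritten, by projection from an appropriate point and use of the projective invariance of $\chi$, in terms of cross-ratios on sides of $A$. The resulting formulas should be clean rational functions of the $y_j(A)$.

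Second, I would identify these transition equations with $Y$-variable mutations in a suitable cluster $Y$-pattern. The shape of the formulas, together with the indexing shift by $\tfrac{1}{2}$ under $T$, strongly suggests that one step of the pentagram map equals a composition of mutations at a specific set of indices, namely all $j$ of one parity. Once this matching is proved, iterating $T$ corresponds to iterating this mutation sequence, and the Fomin--Zelevinsky separation formula (proved in general for $Y$-patterns) expresses each mutated $Y$-variable as a monomial in the initial $Y$-variables times a ratio of $F$-polynomials. The overall shape of formula \eqref{Tk} then falls out for free; matching the specific exponents $\pm 1$ on the $y_{j+3i}$ and the indices $j\pm 1, j\pm 3$ on the $F$-polynomials in the numerator and denominator is just careful bookkeeping about which mutation happens when.

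The main obstacle is establishing the explicit formula \eqref{Fjk} for $F_{j,k}$ as a weighted sum over order ideals of $P_k$. The cluster-algebraic definition gives $F_{j,k}$ by a recursion inherited from mutation, which in this context takes the form of an octahedron (Hirota) recurrence in $(j,k)$. I would therefore define $\widetilde{F}_{j,k} = \sum_{I \in J(P_k)} \prod_{(r,s,t)\in I} y_{3r+s+j}$ and prove by induction on $k$ that $\widetilde{F}_{j,k}$ satisfies the same octahedron recurrence and initial conditions. The combinatorial input, due essentially to \cite{EKLP}, is that order ideals of $P_k$ correspond to height functions of certain domino tilings, and such generating functions are governed by the octahedron recurrence. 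The delicate point is aligning the bookkeeping: one must verify that the weight $y_{3r+s+j}$ assigned to an element $(r,s,t) \in P_k$ is exactly what is needed so that passing $I \mapsto I \cup \{x\}$ or $I \mapsto I \setminus \{x\}$ through the recurrence produces the correct shift in the subscript of $F$. This forces the specific definition $P_k = Q_{k+1}\cup Q_k$ with the congruence conditions given, and will require a careful decomposition of $J(P_k)$ according to which elements of the top (or bottom) antichain are contained in $I$. This compatibility check is the real work of the argument and is what I expect to occupy Section~6.
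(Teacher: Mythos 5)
Your overall architecture is the same as the paper's: derive the one-step transition equations for the $y$-parameters, recognize them as compound $Y$-seed mutations at all indices of one parity for an explicit exchange matrix, invoke the Fomin--Zelevinsky separation formula to get $y_j(T^k(A)) = M_{j,k}\,F_{j-1,k}F_{j+1,k}/(F_{j-3,k}F_{j+3,k})$ with $M_{j,k}$ the tropical evaluation (the paper computes $M_{j,k}=\prod_{i=-k}^{k}y_{j+3i}$ and handles the $j+k$ odd case via $Y_{j,k}=1/Y_{j,k-1}$), and then prove \eqref{Fjk}. One minor difference: the paper obtains the transition equations from Schwartz's $x$-coordinates via $y_j=\mp(x_jx_{j+1})^{\pm1}$ rather than by a direct cross-ratio computation, though it remarks that your geometric route is also viable.

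The genuine concern is your final step. You propose to show that $\widetilde F_{j,k}=\sum_{I\in J(P_k)}\prod_{(r,s,t)\in I}y_{3r+s+j}$ satisfies the recursion \eqref{Fjkrec} ``by induction on $k$,'' with the work being a decomposition of $J(P_k)$ by which elements of an extremal antichain lie in $I$. That will not suffice as stated: the recursion is bilinear (a product $F_{j,k+1}F_{j,k-1}$ equated with a sum of two products over shifted posets), and no partition of $J(P_k)$ by antichain membership produces such product identities --- this is exactly the nontrivial condensation-type content of the Robbins--Rumsey/EKLP circle of results, not bookkeeping. The paper goes the other way: it introduces explicit gauge monomials $m_{i,j,k}$ so that $f_{i,j,k}=m_{i,j,k}F_{3i+j,k}$ is an honest solution of the octahedron recurrence, quotes Robbins--Rumsey's closed-form solution as a sum over compatible pairs of alternating sign matrices, and then transports that sum through the EKLP bijection to order ideals of $Q_{k+1}\cup Q_k=P_k$; the weight alignment you correctly identify as the delicate point is carried by a lemma showing $m_{i,j-1,0}m_{i,j+1,0}/(m_{i-1,j,0}m_{i+1,j,0})=y_{3i+j}$ together with tracking how adding one element to an order ideal changes the associated ASM. So either quote a weighted octahedron-solution theorem precisely (in which case you still owe the ASM-to-order-ideal weight translation and the choice of initial data $f_{i,j,-1}=1/f_{i,j,0}$ that makes both $Q_{k+1}$ and $Q_k$ contribute with positive exponents), or be prepared to reprove a condensation argument; as written, this step is a gap rather than bookkeeping.
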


\begin{ex}
Take $k=2$ in Theorem \ref{thmTk}.  Now $P_1=\{(0,0,0)\}$ has 2 order ideals, namely $\emptyset$ and $P_1$ itself.  So by \eqref{Fjk} we have $F_{j,1} = 1 + y_j$.  Then \eqref{Tk} becomes
\begin{displaymath}
y_j(T^2(A)) = \frac{1}{y_{j-3}y_jy_{j+3}}\frac{(1+y_{j-3})(1+y_{j+3})}{(1+y_{j-1})(1+y_{j+1})}
\end{displaymath}
for $j$ odd.  
Meanwhile, Figure \ref{figureP2weights} shows a copy of $P_2$ in which each vertex $(r,s,t)$ has been labeled with the corresponding variable $y_{3r+s+j}$.  This poset has eight order ideals.  The four which do not contain $y_j$ contribute terms $1$, $y_{j-3}$, $y_{j+3}$, and $y_{j-3}y_{j+3}$ to $F_{j,2}$, adding up to $(1+y_{j-3})(1+y_{j+3})$.  The other four order ideals contribute terms which sum to $y_{j-3}y_jy_{j+3}(1+y_{j-1})(1+y_{j+1})$.  Adding these yields 
\begin{displaymath}
F_{j,2} = (1+y_{j-3})(1+y_{j+3}) + y_{j-3}y_jy_{j+3}(1+y_{j-1})(1+y_{j+1}),
\end{displaymath}
and in terms of these polynomials, \eqref{Tk} gives
\begin{displaymath}
y_j(T^2(A)) = y_{j-6}y_{j-3}y_jy_{j+3}y_{j+6}\frac{F_{j-1,2}F_{j+1,2}}{F_{j-3,2}F_{j+3,2}}
\end{displaymath}
for $j$ even.
\end{ex}

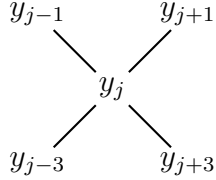
\begin{figure}
\begin{pspicture}(5,4)
\rput(1,3){\rnode{v1}{$y_{j-1}$}}
\rput(3,3){\rnode{v2}{$y_{j+1}$}}
\rput(2,2){\rnode{v3}{$y_j$}}
\rput(1,1){\rnode{v4}{$y_{j-3}$}}
\rput(3,1){\rnode{v5}{$y_{j+3}$}}
\psset{nodesep=2pt}
\ncline{v4}{v3}
\ncline{v5}{v3}
\ncline{v3}{v1}
\ncline{v3}{v2}
\psset{nodesep=0pt}
\end{pspicture}
\caption{The poset $P_2$ with each element $(r,s,t)$ labeled by $y_{3r+s+j}$}
\label{figureP2weights}
\end{figure}

Throughout this paper, we adopt the convention that $\prod_{i=a}^{a-1}z_i = 1$ and $\prod_{i=a}^{b}z_i = \prod_{i=b+1}^{a-1}(1/z_i)$ for $b < a-1$.  This will frequently allow a single formula to encompass what otherwise would require several cases.  With this convention, the property $\prod_{i=a}^bz_i \prod_{i=b+1}^cz_i = \prod_{i=a}^cz_i$ holds for all $a,b,c \in \mathbb{Z}$.

\medskip

\textbf{Acknowledgments.} I thank Sergey Fomin for suggesting this problem and providing valuable guidance throughout.

\section{The transition equations} 
Let $A$ be a twisted $n$-gon.  Since the cross ratio is invariant under projective transformations, it follows that $y_{j+2n}(A) = y_j(A)$ for all $j$.  In this section, we show that each $y$-parameter of $T(A)$ is a rational function of $y_1(A),\ldots,y_{2n}(A)$.  The proof of this fact makes use of the cross ratio coordinates $x_1,\ldots,x_{2n}$ introduced by Schwartz \cite{S}.  

For each index $k$ of $A$ let
\begin{align*}
x_{2k}(A) &= \chi (A_{k-2},A_{k-1},\overleftrightarrow{A_kA_{k+1}} \cap \overleftrightarrow{A_{k-2}A_{k-1}}, \overleftrightarrow{A_{k+1}A_{k+2}} \cap \overleftrightarrow{A_{k-2}A_{k-1}}) \\
x_{2k+1}(A) &= \chi(A_{k+2},A_{k+1},\overleftrightarrow{A_{k}A_{k-1}} \cap \overleftrightarrow{A_{k+2}A_{k+1}}, \overleftrightarrow{A_{k-1}A_{k-2}} \cap \overleftrightarrow{A_{k+2}A_{k+1}}). 
\end{align*}
This definition makes sense as all 4 points in the first cross ratio lie on the line $\overleftrightarrow{A_{k-2}A_{k-1}}$ and those in the second all lie on the line $\overleftrightarrow{A_{k+2}A_{k+1}}$ (see Figure \ref{figurexA}).  As with the $y_j$, we have that the $x_j$ are periodic mod $2n$.  

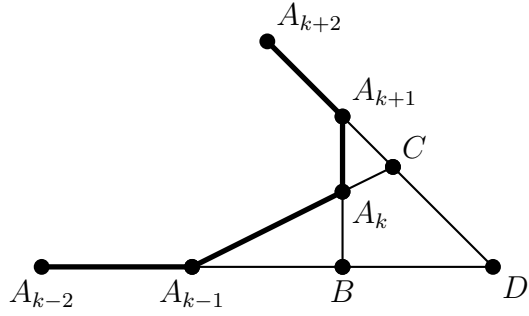
\begin{figure}
\begin{pspicture}(10,6)
\rput(2,0){
\psline[showpoints=true,linewidth=2pt](1,1)(3,1)(5,2)(5,3)(4,4)
\uput[d](1,1){$A_{k-2}$}
\uput[d](3,1){$A_{k-1}$}
\uput[dr](5,2){$A_k$}
\uput[ur](5,3){$A_{k+1}$}
\uput[ur](4,4){$A_{k+2}$}
\psline[showpoints=true,dotsize=2pt 5](3,1)(5,1)(5,2)(5.67,2.33)(5,3)
\psline[showpoints=true,dotsize=2pt 5](5,1)(7,1)(5.67,2.33)
\uput[d](5,1){$B$}
\uput[ur](5.67,2.33){$C$}
\uput[dr](7,1){$D$}
}
\end{pspicture}
\caption{The $x$-coordinates of $A$.  Here, $x_{2k}(A) = \chi(A_{k-2},A_{k-1},B,D)$ and $x_{2k+1}(A) = \chi(A_{k+2},A_{k+1},C,D)$.} \label{figurexA}
\end{figure}

\begin{prop}[\cite{S}] The functions $x_1,\ldots,x_{2n}$ are (generically) a set of coordinates of the space $\mathcal{P}_n$ and of the space $\mathcal{P}_n^*$.
\end{prop}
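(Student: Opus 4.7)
The plan is to construct a rational inverse to the map $\Phi\colon \mathcal{P}_n \to \mathbb{R}^{2n}$, $A \mapsto (x_1(A), \ldots, x_{2n}(A))$, on a Zariski-open subset. Since each $x_j$ is a projective invariant, $\Phi$ is well-defined, and a dimension count gives $\dim \mathcal{P}_n = 2n$: a twisted $n$-gon is determined by $n$ points $A_0, \ldots, A_{n-1}$ together with a monodromy $\phi$ in the $8$-dimensional group of projective transformations of $\mathbb{P}^2$, contributing $2n+8$ parameters in total, and the quotient by projective equivalence removes another $8$. So the target of $\Phi$ has the same dimension as the source, and producing a rational inverse will suffice.

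To invert, I would first use projective equivalence to normalize $A_0, A_1, A_2, A_3$ to a fixed generic quadruple; four points in general position form a projective frame, so this completely rigidifies the ambient group action. Given the $x$-parameters, I then reconstruct $A_4, A_5, \ldots, A_{n+3}$ inductively. Assume $A_{k-2}, A_{k-1}, A_k, A_{k+1}$ have been built. The key observation is that $x_{2k}(A)$ is the cross ratio of four points on the line $\overleftrightarrow{A_{k-2}A_{k-1}}$, three of which --- namely $A_{k-2}$, $A_{k-1}$, and $\overleftrightarrow{A_kA_{k+1}} \cap \overleftrightarrow{A_{k-2}A_{k-1}}$ --- are already known; so $x_{2k}$ pins down the fourth, $\overleftrightarrow{A_{k+1}A_{k+2}} \cap \overleftrightarrow{A_{k-2}A_{k-1}}$, and hence the line $\overleftrightarrow{A_{k+1}A_{k+2}}$ (as the line through $A_{k+1}$ and this new point). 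Then $x_{2k+1}(A)$ is the cross ratio of four points on this line $\overleftrightarrow{A_{k+1}A_{k+2}}$, three of which ($A_{k+1}$ and the two intersections of $\overleftrightarrow{A_kA_{k-1}}$ and $\overleftrightarrow{A_{k-1}A_{k-2}}$ with it) are determined; so $x_{2k+1}$ uniquely locates the last, which is $A_{k+2}$. Iterating this double step for $k = 2, 3, \ldots, n+1$ (reading the $x_j$ cyclically modulo $2n$) reconstructs $A_4, \ldots, A_{n+3}$, and the monodromy $\phi$ is then the unique projective transformation sending $(A_0, A_1, A_2, A_3)$ to $(A_n, A_{n+1}, A_{n+2}, A_{n+3})$. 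Projective invariance of the cross ratio guarantees that propagating by $\phi$ beyond this range is consistent with the recursion.

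The argument for $\mathcal{P}_n^*$ is identical; the only change is that indices run over $\frac{1}{2} + \mathbb{Z}$. The principal obstacle I anticipate is making the word \emph{generically} precise: at each inductive step one needs the three reference points of each cross ratio to be pairwise distinct, certain pairs of lines to meet transversally, and the normalizing quadruple $(A_0, A_1, A_2, A_3)$ to be in general position. Each such non-degeneracy condition is a polynomial inequation in the coordinates of the $A_i$ (equivalently, in the $x_j$), and their intersection carves out the Zariski-open subset of $\mathcal{P}_n$ on which $\Phi$ and its inverse are regular mutual inverses.
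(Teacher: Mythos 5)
The paper does not actually prove this proposition—it is imported verbatim from Schwartz \cite{S}—so there is no internal proof to measure you against; judged on its own, your argument is correct and is essentially the standard one (a dimension count plus explicit generic reconstruction), very much in the spirit of Schwartz's original treatment. Your two half-steps correctly mirror the definitions: in the cross ratio defining $x_{2k}$ the only unknown entry is $\overleftrightarrow{A_{k+1}A_{k+2}} \cap \overleftrightarrow{A_{k-2}A_{k-1}}$, and in the one defining $x_{2k+1}$ the only unknown is $A_{k+2}$ itself; since a cross ratio with three fixed distinct entries is a M\"obius function of the remaining one, each value pins down the unknown, so the double step from $(A_{k-2},\ldots,A_{k+1})$ to $A_{k+2}$ is legitimate and rational. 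Your bookkeeping at the seam is also sound: the steps $k=2,\ldots,n+1$ impose the equations $x_j(A)=x_j$ for $j=4,\ldots,2n+3$, which is a full period, while the $x_j$ of the extended twisted polygon are automatically $2n$-periodic by projective invariance of the cross ratio and $A_{i+n}=\phi(A_i)$; together these give all $j$, and uniqueness follows because two polygons with the same $x$-values, once normalized on the frame $(A_0,\ldots,A_3)$, must agree vertex by vertex and then have the same monodromy (a projective map is determined by the images of four points in general position). The only sentence worth adding concerns genericity: to know your non-degeneracy inequations carve out a \emph{nonempty} dense open set (so that the rational inverse is actually defined somewhere, and hence on a dense open subset), observe that they are not identically zero because some explicit sufficiently generic twisted polygon satisfies all of them; with that remark included, your proof is complete.
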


\begin{prop}[{\cite[(7)]{S}}] \label{propalpha12x} Suppose that $(x_1,\ldots,x_{2n})$ are the $x$-coordinates of $A$.  If $A$ is indexed by $\frac{1}{2} + \mathbb{Z}$ then $T(A)$ has $x$-coordinates $(x_1',\ldots,x_{2n}')$ where
\begin{displaymath}
x_j' = \begin{cases} 
                  x_{j-1}\frac{1-x_{j-3}x_{j-2}}{1-x_{j+1}x_{j+2}},
                  &j \textrm{ even} \\ 
                  x_{j+1}\frac{1-x_{j+3}x_{j+2}}{1-x_{j-1}x_{j-2}},
                  &j \textrm{ odd} \\ 
              \end{cases} 
\end{displaymath}
Alternately, if $A$ is indexed by $\mathbb{Z}$ then the $x$-coordinates of $T(A)$ are
\begin{displaymath}
x_j'' = \begin{cases} 
                  x_{j+1}\frac{1-x_{j+3}x_{j+2}}{1-x_{j-1}x_{j-2}},
                  &j \textrm{ even} \\ 
                  x_{j-1}\frac{1-x_{j-3}x_{j-2}}{1-x_{j+1}x_{j+2}},
                  &j \textrm{ odd} \\ 
             \end{cases}
\end{displaymath}
\end{prop}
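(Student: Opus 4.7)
The plan is to reduce each cross ratio defining $x'_j = x_j(T(A))$ to a cross ratio living on a short diagonal of $A$, and then transfer it by projection to a line on which the $x_j(A)$ are naturally defined. The starting observation is that $B_i$ and $B_{i+1}$ always share one of the defining short diagonals of $A$: for $A$ indexed by $\frac{1}{2}+\mathbb{Z}$, both $B_i$ and $B_{i+1}$ lie on $\overleftrightarrow{A_{i-\frac{1}{2}}A_{i+\frac{3}{2}}}$, so
\[
\overleftrightarrow{B_iB_{i+1}} \;=\; \overleftrightarrow{A_{i-\frac{1}{2}}A_{i+\frac{3}{2}}}.
\]
Substituting these identifications into the definition of $x'_{2k}$ replaces the four collinear points (which a priori involve $B$'s) with four explicit points on the diagonal $L = \overleftrightarrow{A_{k-\frac{5}{2}}A_{k-\frac{1}{2}}}$, one of them being $A_{k-\frac{1}{2}}$ itself and the others being intersections of $L$ with other short diagonals of $A$.

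The main computational tool is invariance of the cross ratio under central projection: the cross ratio of four collinear points equals that of the four lines joining them to any external point, and hence that of their intersections with any other transversal. I would project the four points on $L$ from a carefully chosen vertex of $A$ (for instance $A_{k-\frac{3}{2}}$) onto one of the ``edges'' $\overleftrightarrow{A_mA_{m+1}}$ that appears in the definition of the $x_j(A)$. After this projection, each of the four image points is either a vertex $A_m$ or an intersection of the form $\overleftrightarrow{A_mA_{m\pm 1}}\cap\overleftrightarrow{A_\ell A_{\ell\pm 1}}$, which is precisely the type of point used in the definitions of the $x_j(A)$. Decomposing the resulting cross ratio by standard cross-ratio algebra (multiplicativity along collinear splits, $\chi(a,b,c,d)\chi(a,c,b,d)=1$, etc.) and the defining formulas for $x_{j-3},x_{j-2},x_{j-1},x_{j+1},x_{j+2},x_{j+3}$ should produce the claimed rational expression $x_{j-1}(1-x_{j-3}x_{j-2})/(1-x_{j+1}x_{j+2})$.

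The main obstacle is bookkeeping, not insight: indices shift between $\mathbb{Z}$ and $\frac{1}{2}+\mathbb{Z}$, and the parity conventions in the definitions of $x_{2k}$ versus $x_{2k+1}$ are what force the two separate cases in the proposition. A useful sanity check is to verify the formula on a small example (a hexagon or heptagon in general position), where every intersection point can be written down explicitly, before tackling the general statement. Finally, the second formula, for $A$ indexed by $\mathbb{Z}$, follows from the first by symmetry: both describe the $x$-coordinates of the same geometric object $T(A)$ relative to the two possible indexings, and the parity-reversing shift between those indexings is exactly what interchanges the two cases.
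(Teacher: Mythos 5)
First, note that the paper does not prove this proposition at all: it is quoted verbatim from Schwartz (reference [S], equation (7)), so there is no internal proof to match your argument against. Your proposal is therefore being judged as a stand-alone proof, and as it stands it has a genuine gap: the entire computational core is missing. You correctly identify that the edges of $B=T(A)$ are the short diagonals of $A$ (indeed $\overleftrightarrow{B_iB_{i+1}}=\overleftrightarrow{A_{i-\frac12}A_{i+\frac32}}$), so that $x'_{2k}$ is a cross ratio of four explicit points on $L=\overleftrightarrow{A_{k-\frac52}A_{k-\frac12}}$, one of which is the vertex $A_{k-\frac12}$. But the passage from that cross ratio to the specific expression $x_{j-1}\frac{1-x_{j-3}x_{j-2}}{1-x_{j+1}x_{j+2}}$ is exactly the nontrivial content of the statement, and you defer it entirely to ``standard cross-ratio algebra ... should produce the claimed rational expression.'' This is not mere bookkeeping. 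In particular, your proposed single central projection from $A_{k-\frac32}$ does not send all four points to points of the type appearing in the definitions of the $x_j(A)$: the first three points of the cross ratio ($B_{k-2}$, $B_{k-1}$, $A_{k-\frac12}$) project along lines through $A_{k-\frac32}$ that are edges or short diagonals of $A$, but the fourth point $L\cap\overleftrightarrow{A_{k+\frac12}A_{k+\frac52}}$ joins to $A_{k-\frac32}$ by a line that is in general neither, so its image on the target edge is not a ``standard'' intersection point. To make the plan work you would have to factor the cross ratio (say as a product of cross ratios sharing points, or as a ratio of simple ratios) and transfer the factors by different projections, and then verify that the factors assemble into the products $x_{j\pm1}x_{j\pm2}$ appearing in the $1-x x$ terms; none of this is carried out, and it is precisely where the $1-x_{j-3}x_{j-2}$ and $1-x_{j+1}x_{j+2}$ factors have to come from.

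Two smaller points. The reduction of the $\mathbb{Z}$-indexed case to the $\frac12+\mathbb{Z}$-indexed case is plausible but not automatic ``by symmetry'': the definitions of $x_{2k}$ and $x_{2k+1}$ have different shapes, so relabeling by a half-integer shift permutes them in a way that must be checked explicitly rather than asserted. Finally, if you do want a self-contained argument in the spirit of this paper, the remark after Proposition 2.3 suggests an alternative: the quantities $1+y_j$ and $1+1/y_j$ (equivalently $1-x_jx_{j+1}$) are themselves cross ratios, and the transition equations can be proven as multiplicative cross-ratio identities; but either way the identity itself must actually be derived, not presumed.
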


As observed by V. Ovsienko, R. Schwartz, and S. Tabachnikov in \cite{OST}, the products $x_jx_{j+1}$ are themselves cross ratios.  In fact, $x_jx_{j+1}$ equals the cross ratios used in \eqref{yvertex}--\eqref{yedge} to define $y_j$.  Therefore
\begin{equation} \label{yvertexx}
y_j = -(x_jx_{j+1})^{-1}
\end{equation}
if $j/2$ is an index of $A$ and 
\begin{equation} \label{yedgex}
y_j = -(x_jx_{j+1})
\end{equation}
otherwise. 

It follows that $y_1y_2y_3\cdots y_{2n}=1$ for any twisted polygon.  Therefore the $y$-parameters do not ``coordinatize'' $\mathcal{P}_n$ or $\mathcal{P}_n^*$.  However, the $y_j$ together with Schwartz's pentagram invariants $O_k$ and $E_k$ \cite{S} can be used to determine the $x$-coordinates and hence the polygon (up to projective transformation.)  More precisely, the $y$-parameters determine the products $x_jx_{j+1}$ for $j=1,\ldots,2n$.  These, together with $E_n = x_2x_4\cdots x_{2n}$, can be used to compute $x_j^n$ for each $j$.  If $n$ is odd then this is all that is needed to find the $x$-coordinates.  On the other hand, if $n$ is even then $y_1,\ldots, y_{2n},$ and $E_n$ only determine the $x$-coordinates up to a simultaneous change of sign.  In this event, another pentagram invariant such as $E_1$ can be used to resolve the ambiguity.

The pentagram invariants are interchanged by the pentagram map: $O_k(T(A)) = E_k(A)$ and $E_k(T(A)) = O_k(A)$ for all twisted polygons $A$ (Theorem~ 1.1 of \cite{S}).  What remains, then, is to understand how the pentagram map and its iterates affect the $y$-parameters.

\begin{prop} \label{propalpha12}
Let $(y_1,\ldots,y_{2n})$ be the $y$-parameters of $A$.  If $A$ is indexed by $\frac{1}{2} + \mathbb{Z}$ then $y_j(T(A)) = y_j'$ where
\begin{equation} 
\label{alpha1}
y_j' = \begin{cases} 
              y_{j-3}y_jy_{j+3}\frac{(1+y_{j-1})(1+y_{j+1})}{(1+y_{j-3})(1+y_{j+3})},
              &j \textrm{ even} \\ 
              y_j^{-1},
              &j \textrm{ odd} \\ 
        \end{cases} 
\end{equation}
If $A$ is indexed by $\mathbb{Z}$ then $y_j(T(A)) = y_j''$ where
\begin{equation}
\label{alpha2}        
y_j'' = \begin{cases} 
              y_j^{-1},          
              &j \textrm{ even} \\ 
              y_{j-3}y_jy_{j+3}\frac{(1+y_{j-1})(1+y_{j+1})}{(1+y_{j-3})(1+y_{j+3})},
              &j \textrm{ odd} \\ 
         \end{cases}        
\end{equation}
\end{prop}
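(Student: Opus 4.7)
The strategy is to reduce the claim to Proposition \ref{propalpha12x} via the relations \eqref{yvertexx}--\eqref{yedgex}, which identify each $y$-parameter with $-X_j$ or $-X_j^{-1}$, where $X_j := x_jx_{j+1}$. The sign of the exponent depends on whether $j/2$ is an index of the polygon in question, and this convention flips when we pass from $A$ to $T(A)$, since the two have opposite indexing conventions. For concreteness I would prove the case where $A$ is indexed by $\tfrac{1}{2}+\mathbb{Z}$; the other case is entirely analogous and amounts to swapping the roles of even and odd $j$. Under the half-integer convention, $y_j(A) = -X_j^{-1}$ for $j$ odd and $y_j(A) = -X_j$ for $j$ even; under the integer convention for $T(A)$, this is reversed.

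The first step is to compute $X'_j := x'_jx'_{j+1}$ for $j$ odd, using the odd-$j$ branch of Proposition \ref{propalpha12x} for $x'_j$ and the even-$j$ branch for $x'_{j+1}$. A direct inspection shows that the four correction factors cancel in pairs, leaving $X'_j = x_jx_{j+1} = X_j$. Since $j$ is odd, this translates to $y_j(T(A)) = -X'_j = -X_j = y_j(A)^{-1}$, matching the odd branch of \eqref{alpha1}.

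For $j$ even, the same procedure (even-$j$ branch for $x'_j$, odd-$j+1$ branch for $x'_{j+1}$) produces
\[
X'_j = x_{j-1}x_{j+2} \cdot \frac{(1-X_{j-3})(1-X_{j+3})}{(1-X_{j-1})(1-X_{j+1})}.
\]
Rewriting $x_{j-1}x_{j+2} = X_{j-1}X_{j+1}/X_j$ expresses everything in the $X_i$'s, and then substituting $X_i = -y_i$ for $i$ even and $X_i = -y_i^{-1}$ for $i$ odd (valid since $j$ even forces $j\pm 1,\, j\pm 3$ odd) yields, after routine simplification of the factors of the form $1+y_i^{-1} = (1+y_i)/y_i$, the identity
\[
X'_j = -\frac{1}{y_{j-3}\, y_j\, y_{j+3}} \cdot \frac{(1+y_{j-3})(1+y_{j+3})}{(1+y_{j-1})(1+y_{j+1})}.
\]
Inverting and negating (because $j$ even is "vertex" type for $T(A)$) gives the even branch of \eqref{alpha1}. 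The argument for $A$ indexed by $\mathbb{Z}$ is identical word-for-word with even and odd swapped, producing \eqref{alpha2}. The only real obstacle is bookkeeping: one must keep careful track of which convention applies to $y_j(A)$ versus $y_j(T(A))$, since the vertex/edge dichotomy shifts by one between the two.
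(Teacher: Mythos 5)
Your proposal is correct and follows essentially the same route as the paper's own proof: both reduce the claim to Schwartz's $x$-coordinate transition equations (Proposition \ref{propalpha12x}) via the relations \eqref{yvertexx}--\eqref{yedgex}, cancel the correction factors in the odd case, and perform the same bookkeeping with $1+y_i^{-1}=(1+y_i)/y_i$ in the even case. Introducing $X_j = x_jx_{j+1}$ is only a notational repackaging of the paper's computation, so no further comment is needed.
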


\begin{proof}
We will prove the formula when $A$ is indexed by $\frac{1}{2} + \mathbb{Z}$.  If $j$ is odd then $j/2$ is an index of $A$ but not of $T(A)$, so $y_j = -(x_jx_{j+1})^{-1}$ and $y_j' = -x_j'x_{j+1}'$.  Computing
\begin{align*}
y_j' &= -x_j'x_{j+1}' \\
     &= -\left(x_{j+1}\frac{1-x_{j+3}x_{j+2}}{1-x_{j-1}x_{j-2}}\right)
         \left(x_{j}\frac{1-x_{j-2}x_{j-1}}{1-x_{j+2}x_{j+3}}\right) \\
     &= -x_jx_{j+1} \\
     &= y_j^{-1}. 
\end{align*}
On the other hand, if $j$ is even then $y_j = -(x_jx_{j+1})$ and 
\begin{align*}
y_j' &= -(x_j'x_{j+1}')^{-1} \\
     &= -\left(\left(x_{j-1}\frac{1-x_{j-3}x_{j-2}}{1-x_{j+1}x_{j+2}}\right)
         \left(x_{j+2}\frac{1-x_{j+4}x_{j+3}}{1-x_{j}x_{j-1}}\right)\right)^{-1} \\
     &= -\frac{x_jx_{j+1}}{x_{j-1}x_jx_{j+1}x_{j+2}}
        \frac{(1-x_{j-1}x_{j})(1-x_{j+1}x_{j+2})}{(1-x_{j-3}x_{j-2})(1-x_{j+3}x_{j+4})} \\
     &= y_{j-1}y_jy_{j+1}\frac{(1+1/y_{j-1})(1+1/y_{j+1})}{(1+1/y_{j-3})(1+1/y_{j+3})} \\
     &= y_{j-3}y_jy_{j+3}\frac{(1+y_{j-1})(1+y_{j+1})}{(1+y_{j-3})(1+y_{j+3})}
\end{align*}
as desired.  The case when $A$ is indexed by $\mathbb{Z}$ is similar.
\end{proof}

\begin{rem}
One can prove Proposition~ \ref{propalpha12} without using the $x$-coordinates at all.  By definition, the $y$-parameters are certain negative cross ratios.  It follows that the expressions $1+y_j$ and $1+1/y_j$ are also given by cross ratios.  The equations \eqref{alpha1}--\eqref{alpha2} then become multiplicative cross ratio identities which can be proven geometrically.
\end{rem}

\begin{rem}
Section 12.2 of the survey \cite{KNS} provides formulas analogous to \eqref{alpha1}--\eqref{alpha2} in the setting of quadrilateral lattices in 3-space.
\end{rem}
Let $\alpha_1$ be the rational map $(y_1,\ldots,y_{2n}) \mapsto (y'_1,\ldots,y'_{2n})$ defined by \eqref{alpha1}.  Similarly, let $\alpha_2$ be the rational map $(y_1,\ldots,y_{2n}) \mapsto (y''_1,\ldots,y''_{2n})$ defined by \eqref{alpha2}.  Proposition~ \ref{propalpha12} implies that the $y$-parameters transform under the map $T^k$ according to the rational map $\ldots \circ \alpha_1 \circ \alpha_2 \circ \alpha_1 \circ \alpha_2$ (the composition of $k$ functions), assuming the initial polygon is indexed by integers.  

\section{The associated $Y$-pattern}

The equations \eqref{alpha1}--\eqref{alpha2} can be viewed as transition equations of a certain $Y$-pattern.  $Y$-patterns represent a part of cluster algebra dynamics; they were introduced by Fomin and Zelevinsky \cite{FZ}.  A simplified (but sufficient for our current purposes) version of the relevant definitions is given below.

\begin{defin}
A $Y$-seed is a pair $(\mathbf{y},B)$ where $\mathbf{y}=(y_1,\ldots,y_n)$ is an $n$-tuple of quantities and $B$ is an $n \times n$ skew-symmetric, integer matrix.  The integer $n$ is called the \emph{rank} of the seed.  Given a $Y$-seed $(\mathbf{y},B)$ and some $k=1,\ldots,n$, the \emph{seed mutation} $\mu_k$ in direction $k$ results in a new $Y$-seed $\mu_k(\mathbf{y},B) = (\mathbf{y'},B')$ where
\begin{displaymath}
y_j' = \begin{cases} 
              y_j^{-1}, & j=k \\
              y_jy_k^{[b_{kj}]_+}(1+y_k)^{-b_{kj}}, & j \neq k \\
       \end{cases}
\end{displaymath}
and $B'$ is the matrix with entries
\begin{displaymath}
b_{ij}' = \begin{cases} 
              -b_{ij}, & i=k \textrm{ or } j=k \\
              b_{ij} + \sgn(b_{ik})[b_{ik}b_{kj}]_+, & \textrm{otherwise} \\
       \end{cases}
\end{displaymath}
In these formulas, $[x]_+$ is shorthand for $\max(x,0)$.  
\end{defin}

The data of the exchange matrix $B$ can alternately be represented by a \emph{quiver}.  This is a directed graph on vertex set $\{1,\ldots,n\}$.  For each $i$ and $j$, there are $|b_{ij}|$ arcs connecting vertex $i$ and vertex $j$.  Each such arc is oriented from $i$ to $j$ if $b_{ij} > 0$ and from $j$ to $i$ if $b_{ij} < 0$.  In terms of quivers, the mutation $\mu_k$ consists of the three following steps
\begin{enumerate}
\item For every length 2 path $i \to k \to j$, add an arc from $i$ to $j$.
\item Reverse the orientation of all arcs incident to $k$.
\item Remove all oriented 2-cycles.
\end{enumerate}

Figure \ref{figurequiver} illustrates some quiver mutations applied to the quiver associated with the exchange matrix
\begin{displaymath}
\left[
\begin{array}{cccccc}
0  &  1&  0& -1&  0&  0 \\
-1 &  0& -1&  0&  1&  0 \\
0  &  1&  0&  0&  0& -1 \\
1  &  0&  0&  0& -1&  0 \\
0  & -1&  0&  1&  0&  1 \\
0  &  0&  1&  0& -1&  0 \\
\end{array}
\right].
\end{displaymath}
Note that in this example the mutated quiver is the same as the initial one except that all the arrows have been reversed.  The is an instance of a more general phenomenon described by the following lemma.

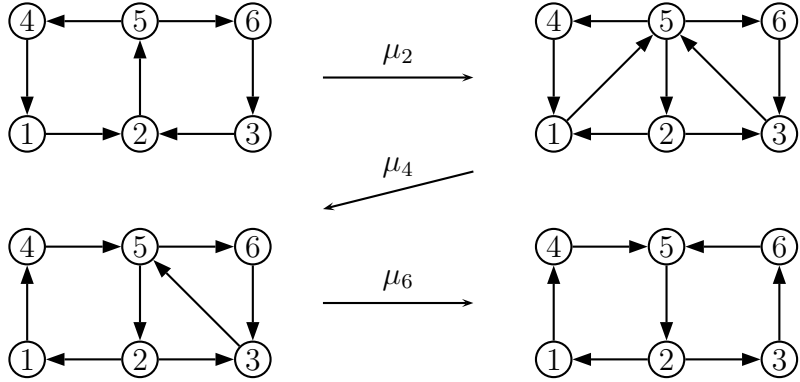
\begin{figure}
\begin{pspicture}(12,8)
\SpecialCoor
\rput(0,3){
% The vertices
\cnode(1,1){.25}{v10}
\rput(v10){1}
\cnode(2.5,1){.25}{v20}
\rput(v20){2}
\cnode(4,1){.25}{v30}
\rput(v30){3}
\cnode(1,2.5){.25}{v40}
\rput(v40){4}
\cnode(2.5,2.5){.25}{v50}
\rput(v50){5}
\cnode(4,2.5){.25}{v60}
\rput(v60){6}
% The edges
\psset{arrowsize=5pt}
\psset{arrowinset=0}
\ncline{->}{v10}{v20}
\ncline{->}{v30}{v20}
\ncline{->}{v40}{v10}
\ncline{->}{v20}{v50}
\ncline{->}{v60}{v30}
\ncline{->}{v50}{v40}
\ncline{->}{v50}{v60}
}
\rput(7,3){
% The vertices
\cnode(1,1){.25}{v11}
\rput(v11){1}
\cnode(2.5,1){.25}{v21}
\rput(v21){2}
\cnode(4,1){.25}{v31}
\rput(v31){3}
\cnode(1,2.5){.25}{v41}
\rput(v41){4}
\cnode(2.5,2.5){.25}{v51}
\rput(v51){5}
\cnode(4,2.5){.25}{v61}
\rput(v61){6}
% The edges
\psset{arrowsize=5pt}
\psset{arrowinset=0}
\ncline{->}{v11}{v51}
\ncline{->}{v31}{v51}
\ncline{->}{v21}{v11}
\ncline{->}{v21}{v31}
\ncline{->}{v41}{v11}
\ncline{->}{v51}{v21}
\ncline{->}{v61}{v31}
\ncline{->}{v51}{v41}
\ncline{->}{v51}{v61}
}
\rput(0,0){
% The vertices
\cnode(1,1){.25}{v12}
\rput(v12){1}
\cnode(2.5,1){.25}{v22}
\rput(v22){2}
\cnode(4,1){.25}{v32}
\rput(v32){3}
\cnode(1,2.5){.25}{v42}
\rput(v42){4}
\cnode(2.5,2.5){.25}{v52}
\rput(v52){5}
\cnode(4,2.5){.25}{v62}
\rput(v62){6}
% The edges
\psset{arrowsize=5pt}
\psset{arrowinset=0}
\ncline{->}{v32}{v52}
\ncline{->}{v22}{v12}
\ncline{->}{v22}{v32}
\ncline{->}{v12}{v42}
\ncline{->}{v52}{v22}
\ncline{->}{v62}{v32}
\ncline{->}{v42}{v52}
\ncline{->}{v52}{v62}
}
\rput(7,0){
% The vertices
\cnode(1,1){.25}{v13}
\rput(v13){1}
\cnode(2.5,1){.25}{v23}
\rput(v23){2}
\cnode(4,1){.25}{v33}
\rput(v33){3}
\cnode(1,2.5){.25}{v43}
\rput(v43){4}
\cnode(2.5,2.5){.25}{v53}
\rput(v53){5}
\cnode(4,2.5){.25}{v63}
\rput(v63){6}
% The edges
\psset{arrowsize=5pt}
\psset{arrowinset=0}
\ncline{->}{v23}{v13}
\ncline{->}{v23}{v33}
\ncline{->}{v13}{v43}
\ncline{->}{v53}{v23}
\ncline{->}{v33}{v63}
\ncline{->}{v43}{v53}
\ncline{->}{v63}{v53}
}
% The map arrows
\psset{arrowsize=1.5pt 2}
\psline{->}(5,4.75)(7,4.75)
\uput[u](6,4.75){$\mu_2$}
\psline{->}(7,3.5)(5,3)
\uput[u](6,3.25){$\mu_4$}
\psline{->}(5,1.75)(7,1.75)
\uput[u](6,1.75){$\mu_6$}
\end{pspicture}
\caption{Some quiver mutations} \label{figurequiver}
\end{figure}

\begin{lem} \label{lemBipartite}
Suppose that $(\mathbf{y},B)$ is a $Y$-seed of rank $2n$ such that $b_{ij}=0$ whenever $i,j$ have the same parity (so the associated quiver is bipartite).  Assume also that for all $i$ and $j$ the number of length 2 paths in the quiver from $i$ to $j$ equals the number of length 2 paths from $j$ to $i$.  Then the $\mu_i$ for $i$ odd pairwise commute as do the $\mu_i$ for $i$ even.  Moreover, $\mu_{2n-1} \circ \cdots \circ \mu_3 \circ \mu_1(y,B) = (\mathbf{y'}, -B)$ and $\mu_{2n} \circ \cdots \circ \mu_4 \circ \mu_2(\mathbf{y},B) = (\mathbf{y''}, -B)$ where
\begin{align}
\label{muodd}
y'_j &= \begin{cases}
                y_j\prod_ky_k^{[b_{kj}]_+}(1+y_k)^{-b_{kj}}, & j \textrm{ even} \\
                y_j^{-1}, &      j \textrm{ odd} \\
        \end{cases} \\
\label{mueven}
y''_j &= \begin{cases}
                y_j^{-1}, &      j \textrm{ even} \\
                y_j\prod_ky_k^{[b_{kj}]_+}(1+y_k)^{-b_{kj}},  & j \textrm{ odd} \\
         \end{cases} 
\end{align}
\end{lem}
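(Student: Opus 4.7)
My plan is to verify the three assertions by exploiting the bipartite structure together with the path-count hypothesis, in the stated order. \textbf{Commutativity:} I would first check that $\mu_i$ and $\mu_j$ commute whenever $i$ and $j$ share a parity. Since $b_{ij}=0$ in that case, the $y$-update formula gives $y_j$ unchanged under $\mu_i$ (and vice versa), while in the exchange-matrix update the correction $\sgn(b_{ai})[b_{ai}b_{ib}]_+$ vanishes whenever $a=j$ or $b=j$ because $b_{ji}=0$. Thus $\mu_i$ and $\mu_j$ act on disjoint data, and commutativity follows directly from the mutation formulas.

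\textbf{The $y$-update under $\mu_{\mathrm{odd}} := \mu_{2n-1}\circ\cdots\circ\mu_1$:} Next I would track the composite step by step. The key observation, provable by an easy induction on the number of mutations applied, is that throughout the sequence the entries $b_{k\ell}$ with both $k$ and $\ell$ odd remain zero, and for each $k$ odd the variable $y_k$ stays equal to its initial value right up to the moment $\mu_k$ is applied. Given this, the action on the $y$-variables is transparent: for $j$ odd, $y_j$ is merely inverted by $\mu_j$ and never modified afterward, so $y'_j=y_j^{-1}$; for $j$ even, each $\mu_k$ with $k$ odd multiplies $y_j$ by $y_k^{[b_{kj}]_+}(1+y_k)^{-b_{kj}}$ evaluated at the \emph{initial} values of $y_k$ and $b_{kj}$. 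Since $b_{kj}=0$ whenever $k$ is even, the product over odd $k$ extends to a product over all $k$, which recovers \eqref{muodd}.

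\textbf{The matrix update:} Finally I would verify that $B$ becomes $-B$. The odd--odd block stays at zero, and each odd--even entry $b_{ij}$ ($i$ odd, $j$ even) is negated exactly once, namely by $\mu_i$, since the correction from any other $\mu_k$ with $k$ odd, $k\neq i$ vanishes as $b_{ik}=0$. The even--even block, which starts at zero, is where the hypothesis plays its role. The same tracking argument shows that the cumulative change in $b_{ij}$ for $i,j$ both even is
\[
\sum_{k\text{ odd}} \bigl([b_{ik}]_+[b_{kj}]_+ - [b_{ki}]_+[b_{jk}]_+\bigr),
\]
which is exactly the difference between the number of length-$2$ paths from $i$ to $j$ and from $j$ to $i$ in the quiver. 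The hypothesis forces this to vanish, so $b_{ij}^{\mathrm{new}}=0=-b_{ij}$, completing the proof for $\mu_{\mathrm{odd}}$. The main obstacle is precisely this last step, since it is where the hypothesis must be invoked and where one has to rewrite the skew-symmetrized mutation correction as a signed count of length-$2$ paths. The composition $\mu_{2n}\circ\cdots\circ\mu_2$ is handled identically after interchanging the roles of the two parity classes.
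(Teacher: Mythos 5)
Your proof is correct and is essentially the argument the paper intends: the paper omits the proof entirely, remarking only that it is ``a simple calculation using the description of quiver mutations,'' and your tracking argument (odd--odd entries stay zero, the $y$-value and row of each not-yet-mutated odd vertex stay at their initial values, odd--even entries are negated exactly once, and the cumulative even--even correction $\sum_{k}\bigl([b_{ik}]_+[b_{kj}]_+-[b_{ki}]_+[b_{jk}]_+\bigr)$ is the signed count of length-2 paths, which vanishes by hypothesis) supplies exactly that calculation. The only loose phrase is that $\mu_i$ and $\mu_j$ ``act on disjoint data'' in the commutativity step---both do modify the even $y$-variables and even--even matrix entries when $i,j$ are odd---but the facts you cite (unchanged $y_j$ and unchanged row/column $j$) are precisely what shows the two updates are multiplications/additions by quantities unaffected by the other mutation, so commutativity follows as you say.
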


The proof of this lemma is a simple calculation using the description of quiver mutations above.  Note that the term bipartite, as used in the statement of the lemma, simply means that each arc in the quiver connects an odd vertex and an even vertex.  No condition on the orientation of the arcs is placed.  A stronger notion would require that all arcs begin at an odd vertex and end at an even one.  The discussion of bipartite belts in \cite{FZ} uses the stronger condition.  As such, the results proven there do not apply to the current context.  We will, however, use much of the same notation.

Let $\mu_{\textrm{even}}$ be the compound mutation $\mu_{\textrm{even}} = \mu_{2n} \circ \ldots \circ \mu_4 \circ \mu_2$ and let $\mu_{\textrm{odd}} = \mu_{2n-1} \circ \ldots \circ \mu_3 \circ \mu_1$.  Equations \eqref{alpha1}--\eqref{alpha2} and \eqref{muodd}--\eqref{mueven} suggest that $\alpha_1$ and $\alpha_2$ are instances of $\mu_{\textrm{odd}}$ and $\mu_{\textrm{even}}$, respectively. Indeed, let $B_0$ be the matrix with entries
\begin{displaymath}
b_{ij}^0 = \begin{cases}
            (-1)^j,   & i-j \equiv \pm 1 \pmod{2n} \\
             (-1)^{j+1},    & i-j \equiv \pm 3 \pmod{2n} \\
             0,             & \textrm{otherwise} \\
         \end{cases}
\end{displaymath}
The corresponding quiver in the case $n=8$ is shown in Figure \ref{figureB0}. 
\begin{figure}
\begin{pspicture}(-5,-5)(5,5)
\SpecialCoor
\cnode(2;22.5){.25}{v1}
\rput(v1){1}
\cnode(4;45){.25}{v2}
\rput(v2){2}
\cnode(2;67.5){.25}{v3}
\rput(v3){3}
\cnode(4;90){.25}{v4}
\rput(v4){4}
\cnode(2;112.5){.25}{v5}
\rput(v5){5}
\cnode(4;135){.25}{v6}
\rput(v6){6}
\cnode(2;157.5){.25}{v7}
\rput(v7){7}
\cnode(4;180){.25}{v8}
\rput(v8){8}
\cnode(2;202.5){.25}{v9}
\rput(v9){9}
\cnode(4;225){.25}{v10}
\rput(v10){10}
\cnode(2;247.5){.25}{v11}
\rput(v11){11}
\cnode(4;270){.25}{v12}
\rput(v12){12}
\cnode(2;292.5){.25}{v13}
\rput(v13){13}
\cnode(4;315){.25}{v14}
\rput(v14){14}
\cnode(2;337.5){.25}{v15}
\rput(v15){15}
\cnode(4;0){.25}{v16}
\rput(v16){16}

\psset{arrowsize=5pt}
\psset{arrowinset=0}
\ncline{->}{v1}{v16}   \ncline{->}{v1}{v2}
\ncline{->}{v3}{v2}    \ncline{->}{v3}{v4} 
\ncline{->}{v5}{v4}    \ncline{->}{v5}{v6}
\ncline{->}{v7}{v6}    \ncline{->}{v7}{v8}
\ncline{->}{v9}{v8}    \ncline{->}{v9}{v10}
\ncline{->}{v11}{v10}  \ncline{->}{v11}{v12}
\ncline{->}{v13}{v12}  \ncline{->}{v13}{v14}
\ncline{->}{v15}{v14}  \ncline{->}{v15}{v16}
\ncline{->}{v16}{v13} \ncline{->}{v16}{v3}
\ncline{->}{v2}{v15} \ncline{->}{v2}{v5}
\ncline{->}{v4}{v1} \ncline{->}{v4}{v7}
\ncline{->}{v6}{v3} \ncline{->}{v6}{v9}
\ncline{->}{v8}{v5} \ncline{->}{v8}{v11}
\ncline{->}{v10}{v7} \ncline{->}{v10}{v13}
\ncline{->}{v12}{v9} \ncline{->}{v12}{v15}
\ncline{->}{v14}{v11} \ncline{->}{v14}{v1}

\end{pspicture}
\caption{The quiver associated with the exchange matrix $B_0$ for $n=8$} \label{figureB0}
\end{figure}
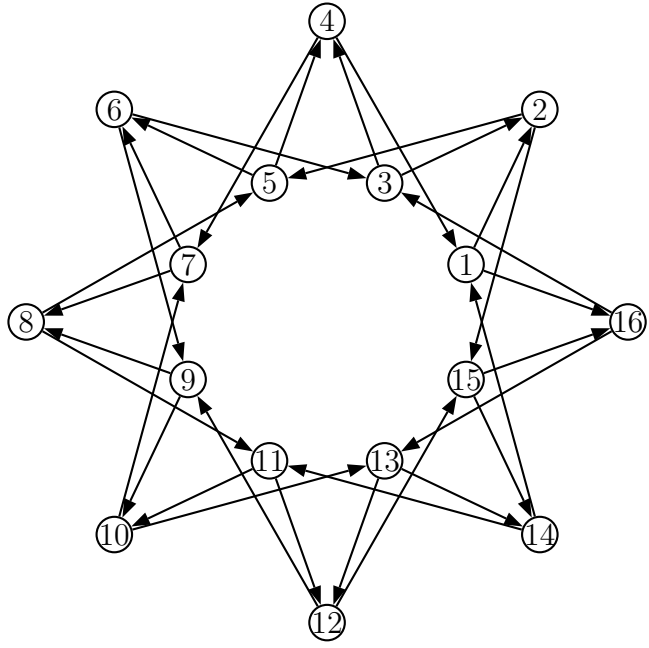

\begin{prop}
$\mu_{\textrm{even}}(\mathbf{y},B_0) = (\alpha_2(\mathbf{y}), -B_0)$ and $\mu_{\textrm{odd}}(\mathbf{y},-B_0) = (\alpha_1(\mathbf{y}), B_0)$.
\end{prop}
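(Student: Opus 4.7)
The plan is to realize the maps $\alpha_1$ and $\alpha_2$ as the compound mutations $\mu_{\textrm{odd}}$ and $\mu_{\textrm{even}}$ of Lemma \ref{lemBipartite}, applied to the exchange matrices $-B_0$ and $B_0$ respectively. Three steps are needed: verify the hypotheses of Lemma \ref{lemBipartite} for $(\mathbf{y},\pm B_0)$; invoke the lemma to get both the matrix transformation and the general form of the $y$-parameter transformation; and match the resulting formulas against \eqref{alpha1}--\eqref{alpha2}.

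First, the quiver of $B_0$ is bipartite because $b^0_{ij}\neq 0$ forces $i-j\equiv \pm 1$ or $\pm 3 \pmod{2n}$, so $i$ and $j$ have opposite parities. For the balance condition on length-2 paths, one reads off from the definition that the out-arrows from an even vertex $i$ go to $i\pm 3$, while the out-arrows from an odd vertex $k$ go to $k\pm 1$. Consequently, the length-2 paths starting at an even vertex $i$ terminate at the four vertices $\{i-4,i-2,i+2,i+4\}$, each reached by exactly one path. An identical analysis, traversing arrows in reverse, shows the same set of endpoints with the same multiplicities for the length-2 paths \emph{ending} at $i$; the odd case is symmetric. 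Since reversing every arrow of the quiver preserves bipartiteness and swaps the two path counts, $-B_0$ satisfies the hypotheses as well.

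Lemma \ref{lemBipartite} now yields $\mu_{\textrm{even}}(\mathbf{y},B_0)=(\mathbf{y}'',-B_0)$ and $\mu_{\textrm{odd}}(\mathbf{y},-B_0)=(\mathbf{y}',B_0)$, with $\mathbf{y}'$ and $\mathbf{y}''$ given by \eqref{muodd}--\eqref{mueven}. It remains to match the resulting $y$-parameter formulas with \eqref{alpha1}--\eqref{alpha2}. For $j$ odd, the four nonzero entries $b^0_{kj}$ with $k$ even sit at $k=j\pm 1$ and $k=j\pm 3$, with values $b^0_{(j\pm 1)j}=(-1)^j=-1$ and $b^0_{(j\pm 3)j}=(-1)^{j+1}=1$. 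Feeding these into \eqref{mueven} yields $y''_j=y_{j-3}\,y_j\,y_{j+3}(1+y_{j-1})(1+y_{j+1})/[(1+y_{j-3})(1+y_{j+3})]$, matching \eqref{alpha2}; for $j$ even, \eqref{mueven} collapses to $y''_j=y_j^{-1}$, also matching \eqref{alpha2}. The computation for $\alpha_1$ is identical with $-B_0$ in place of $B_0$: negating every $b^0_{kj}$ simply exchanges the two parity cases, so the same formulas reproduce \eqref{alpha1}.

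The main obstacle is the sign bookkeeping in this last step. The definition of $B_0$ packages two sign factors depending on the parity of $j$ and the distance $|i-j|$, while the mutation formula involves $[b_{kj}]_+$ and $-b_{kj}$ in the exponents of $y_k$ and $1+y_k$. One must check carefully that the four nearest-neighbor contributions ($k=j\pm 1$ and $k=j\pm 3$) assemble so that the $y_{j\pm 3}$ factors appear in the numerator to the first power and the $1+y_{j\pm 3}$ factors appear in the denominator, while $y_{j\pm 1}$ and $1+y_{j\pm 1}$ appear with the opposite pattern. Once the sign pattern of $B_0$ is correctly identified this is immediate, so no serious difficulty remains.
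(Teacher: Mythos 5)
Your proposal is correct and follows essentially the same route as the paper: check that $B_0$ (and $-B_0$) satisfies the bipartiteness and length-2 path-count hypotheses of Lemma \ref{lemBipartite}, then match the exponents $[b^0_{kj}]_+$ and $-b^0_{kj}$ at $k=j\pm1,\,j\pm3$ against \eqref{alpha1}--\eqref{alpha2}; your explicit treatment of the $-B_0$ case is just a slightly more detailed version of the paper's ``the proof for $\alpha_1$ is similar.''
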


\begin{proof}
First of all, $B_0$ is skew-symmetric and $b_{i,j}^0 = 0$ for $i,j$ of equal parity.  In the quiver associated to $B_0$, the number of length 2 paths from $i$ to $j$ is 1 if $|i-j| \in \{2,4\}$ and 0 otherwise.  Therefore, Lemma~ \ref{lemBipartite} applies to $B_0$ and $\mu_{\textrm{even}}$ is given by \eqref{mueven}.

Both $\alpha_2$ and $\mu_{\textrm{even}}$ invert the $y_j$ for $j$ even.  Now suppose $j$ is odd.  Then $\alpha_2$ has the effect of multiplying $y_j$ by
\begin{displaymath}
y_{j-3}y_{j+3}\frac{(1+y_{j-1})(1+y_{j+1})}{(1+y_{j-3})(1+y_{j+3})}
\end{displaymath}
while $\mu_{\textrm{even}}$ multiplies $y_j$ by
\begin{displaymath}
\prod_ky_k^{[b_{kj}^0]_+}(1+y_k)^{-b_{kj}^0}.
\end{displaymath}
Since $j$ is odd, we have $b_{j\pm1,j}^0 = -1$ and $b_{j\pm3,j}^0 = 1$.  So these two factors agree.  This shows that $\alpha_2$ and $\mu_{\textrm{even}}$ have the same effect on the $y$-variables.  That $\mu_{\textrm{even}}$ negates the exchange matrix $B_0$ also follows from Lemma~ \ref{lemBipartite}.

The proof that $\alpha_1$ corresponds to the mutation $\mu_{\textrm{odd}}$, applied with exchange matrix $-B_0$, is similar.
\end{proof}

\section{The formula for an iterate of the pentagram map}
Let $A$ be a twisted $n$-gon indexed by $\mathbb{Z}$, and let $\mathbf{y}=(y_1,\ldots,y_{2n})$ be its $y$-parameters.  For $k \geq 0$ let $\mathbf{y}_k = (y_{1,k},\ldots,y_{2n,k})$ be the $y$-parameters of $T^k(A)$.  In other words, $\mathbf{y}_0 = \mathbf{y}$, $\mathbf{y}_{2m+1} = \alpha_2(\mathbf{y}_{2m})$, and $\mathbf{y}_{2m} = \alpha_1(\mathbf{y}_{2m-1})$.  The results of the previous section show that the $\mathbf{y}_k$ are related by seed mutations:
\begin{displaymath}
(\mathbf{y}_0,B_0) \xrightarrow{\mu_{\textrm{even}}} (\mathbf{y}_1,-B_0) \xrightarrow{\mu_{\textrm{odd}}} (\mathbf{y}_2,B_0) \xrightarrow{\mu_{\textrm{even}}} (\mathbf{y}_3,-B_0) \xrightarrow{\mu_{\textrm{odd}}} \cdots 
\end{displaymath}

Note that each $y_{j,k}$ is a rational function of $y_1,\ldots,y_{2n}$.  In the language of cluster algebras, this rational function is denoted $Y_{j,k} \in \mathbb{Q}(y_1,\ldots,y_{2n})$.  Explicitly, $Y_{j,0} = y_j$ and by \eqref{alpha1} and \eqref{alpha2}
\begin{displaymath}
Y_{j,k+1} = \begin{cases} 
              1/Y_{j,k},             &j+k \textrm{ even} \\ 
              Y_{j-3,k}Y_{j,k}Y_{j+3,k} \frac{(1+Y_{j-1,k})(1+Y_{j+1,k})} {(1+Y_{j-3,k})(1+Y_{j+3,k})},
              &j+k \textrm{ odd} \\ 
         \end{cases}
\end{displaymath}

To simplify formulas, it is easier to consider only the $Y_{j,k}$ for $j+k$ even.  The recurrence satisfied by these rational functions is $Y_{j,-1}= 1/y_j$ for $j$ odd, $Y_{j,0} = y_j$ for $j$ even, and
\begin{displaymath}
Y_{j,k} = \frac{Y_{j-3,k-1}Y_{j+3,k-1}}{Y_{j,k-2}} \frac{(1+Y_{j-1,k-1})(1+Y_{j+1,k-1})} {(1+Y_{j-3,k-1})(1+Y_{j+3,k-1})}
\end{displaymath}
for $j+k$ even and $k \geq 1$.  From these, it is easy to compute the other $Y_{j,k}$ because if $j+k$ is odd, then $Y_{j,k} = 1/Y_{j,k-1}$.

Proposition~ 3.13 of \cite{FZ}, specialized to the present context, says that if $j+k$ is even then $Y_{j,k}$ can be written in the form
\begin{equation} \label{Yjk}
Y_{j,k} = M_{j,k}\frac{F_{j-1,k}F_{j+1,k}}{F_{j-3,k}F_{j+3,k}}. 
\end{equation}
Here, $M_{j,k}$ is a Laurent monomial in $y_1,\ldots, y_{2n}$ and the $F_{i,k}$ are certain polynomials over $y_1,\ldots, y_{2n}$.  A description of these component pieces follows.

The monomial $M_{j,k}$ is given by the evaluation of the rational expressions $Y_{j,k}$ in the tropical semifield $\mathbb{P} = Trop(y_1,\ldots,y_{2n})$.  This is carried out as follows.  First of all, $Y_{j,k}$ is expressed in such a manner that no minus signs appear (that this is possible is clear from transition equations of the $Y$-pattern).  Next, each plus sign is replaced by the auxiliary addition $\oplus$ symbol.  This is a binary operation on Laurent monomials defined by $\prod_iy_i^{a_i} \oplus \prod_iy_i^{a_i'} = \prod_iy_i^{\min(a_i,a_i')}$.  Finally, this operation together with multiplication and division of monomials is used to compute a result.  As an example, 
\begin{displaymath}
Y_{3,1} = y_{0}y_3y_{6}\frac{(1+y_{2})(1+y_{4})}{(1+y_{0})(1+y_{6})}
\end{displaymath}
so
\begin{displaymath}
M_{3,1} = \left . y_{0}y_3y_{6}\frac{(1+y_{2})(1+y_{4})}{(1+y_{0})(1+y_{6})}\right |_\mathbb{P} 
                    = y_{0}y_3y_{6}\frac{(1\oplus y_{2})(1\oplus y_{4})}{(1\oplus y_{0})(1\oplus y_{6})} 
                    = y_{0}y_3y_{6}.
\end{displaymath}

Now $M_{j,-1} = Y_{j,-1} = 1/y_j$ for $j$ odd and $M_{j,0} = Y_{j,0} = y_j$ for $j$ even.  The transition equation for the monomials is identical to the transition equation for the $Y_{j,k}$, except that $+$ is replaced throughout by $\oplus$.  So, if $j+k$ is even and $k \geq 1$ then

\begin{displaymath}
M_{j,k} = \frac{M_{j-3,k-1}M_{j+3,k-1}}{M_{j,k-2}} \frac{(1\oplus M_{j-1,k-1})(1\oplus M_{j+1,k-1})} {(1\oplus M_{j-3,k-1})(1\oplus M_{j+3,k-1})}.
\end{displaymath}

\begin{prop} \label{propMjk}
The solution to this recurrence is given by 
\begin{equation} \label{Mjk}
M_{j,k} = \prod_{i=-k}^k y_{j+3i}
\end{equation}
for $j+k$ even.
\end{prop}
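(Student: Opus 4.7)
The plan is to proceed by induction on $k$, using the recurrence stated just above the proposition together with the base conditions $M_{j,-1} = 1/y_j$ (for $j$ odd) and $M_{j,0} = y_j$ (for $j$ even). These base cases agree with the claimed formula by the convention on products with inverted limits: $\prod_{i=1}^{-1} y_{j+3i} = 1/y_j$ and $\prod_{i=0}^{0} y_{j+3i} = y_j$.

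For the inductive step, fix $j,k$ with $j+k$ even and $k \geq 1$, and assume the formula holds for all $M_{j',k'}$ with $k' < k$. The recurrence factors $M_{j,k}$ as a ``rational part'' $\frac{M_{j-3,k-1}M_{j+3,k-1}}{M_{j,k-2}}$ times a ``tropical part'' $\frac{(1\oplus M_{j-1,k-1})(1\oplus M_{j+1,k-1})}{(1\oplus M_{j-3,k-1})(1\oplus M_{j+3,k-1})}$. I would handle these two parts separately.

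For the rational part, substitute the inductive hypothesis:
\begin{align*}
M_{j-3,k-1} &= \prod_{i=-k}^{k-2} y_{j+3i}, \qquad
M_{j+3,k-1} = \prod_{i=-k+2}^{k} y_{j+3i}, \\
M_{j,k-2} &= \prod_{i=-k+2}^{k-2} y_{j+3i}.
\end{align*}
Multiplying the first two and dividing by the third, the common range $i=-k+2,\ldots,k-2$ cancels, leaving exactly $\prod_{i=-k}^{k} y_{j+3i}$, which is the desired formula. (The case $k=1$ works too, since the convention collapses the denominator $M_{j,-1} = 1/y_j$ correctly.)

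For the tropical part, observe that each of $M_{j\pm 1,k-1}$ and $M_{j\pm 3,k-1}$ has second index $k-1 \geq 0$, and each shifted first index keeps parity so that the inductive hypothesis applies. Thus each of these four quantities is a genuine monomial in the $y_i$ with all exponents equal to $+1$. Since $1 \oplus y_{i_1}\cdots y_{i_m} = \prod y_{i_\ell}^{\min(0,1)} = 1$ whenever all exponents are non-negative, each of the four factors $1 \oplus M_{\cdot,\cdot}$ equals $1$, and the tropical part is $1$. Combined with the rational part, this yields $M_{j,k} = \prod_{i=-k}^{k} y_{j+3i}$, completing the induction.

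The main subtlety, rather than any deep obstacle, is bookkeeping with the convention on products with $b < a-1$: one must verify that the telescoping in the rational part still gives the right answer in the edge case $k=1$ (where $M_{j,k-2} = 1/y_j$), and that the tropical-part monomials have no negative exponents even in that edge case. Both checks are immediate once one notes that the inductive hypothesis produces an ordinary (non-Laurent) monomial whenever the second index is $\geq 0$.
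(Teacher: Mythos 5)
Your proof is correct and follows essentially the same route as the paper's: check the initial conditions, telescope the product $M_{j-3,k-1}M_{j+3,k-1}/M_{j,k-2}$, and observe that each $M_{j\pm1,k-1}$, $M_{j\pm3,k-1}$ is an honest (non-Laurent) monomial so every factor $1\oplus M$ equals $1$. Your extra care with the $k=1$ edge case and the product convention is fine but not a departure from the paper's argument.
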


\begin{proof}
Clearly the initial conditions are satisfied.  Suppose $j+k$ is even and $k \geq 1$.  Then
\begin{align*}
M_{j,k} &= \prod_{i=-k}^k y_{j+3i} \\
          &= \frac{\prod_{i=-k+1}^{k-1} y_{j-3+3i}\prod_{i=-k+1}^{k-1} y_{j+3+3i}}{\prod_{i=-k+2}^{k-2} y_{j+3i}} \\ 
          &= \frac{M_{j-3,k-1}M_{j+3,k-1}}{M_{j,k-2}} \\
          &= \frac{M_{j-3,k-1}M_{j+3,k-1}}{M_{j,k-2}} \frac{(1\oplus M_{j-1,k-1})(1\oplus M_{j+1,k-1})} {(1\oplus M_{j-3,k-1})(1\oplus M_{j+3,k-1})}. 
\end{align*}
The last equality is justified because each $M_{j+i,k-1}$ for $i$ odd is an actual monomial (as opposed to a Laurent monomial), so $1 \oplus M_{j+i,k} = 1$ for these $i$.
\end{proof}

The $F_{j,k}$ for $j+k$ odd are defined recursively as follows.  Put $F_{j,-1}= 1$ for $j$ even, $F_{j,0} = 1$ for $j$ odd, and 
\begin{displaymath}
F_{j,k+1} = \frac{F_{j-3,k}F_{j+3,k} + M_{j,k}F_{j-1,k}F_{j+1,k}}{(1 \oplus M_{j,k})F_{j,k-1}} 
\end{displaymath}
for $j+k$ even and $k \geq 0$.  Recall $M_{j,k} = \prod_{i=-k}^k y_{j+3i}$ so this formula simplifies to
\begin{equation} \label{Fjkoddrec}
F_{j,k+1} = \frac{F_{j-3,k}F_{j+3,k} + (\prod_{i=-k}^k y_{j+3i})F_{j-1,k}F_{j+1,k}}{F_{j,k-1}}. 
\end{equation}
For example, $F_{j,1} = 1 + y_j$ and 
\begin{equation} \label{Fj2}
F_{j,2} = (1+y_{j-3})(1+y_{j+3}) + y_{j-3}y_jy_{j+3}(1+y_{j-1})(1+y_{j+1}).
\end{equation}
Although it is not clear from this definition, the $F_{j,k}$ are indeed polynomials.  This is a consequence of general cluster algebra theory.

Equations \eqref{Yjk}--\eqref{Mjk} and the fact that $Y_{j,k} = 1/Y_{j,k-1}$ for $j+k$ odd combine to prove that the formula given in Theorem~ \ref{thmTk} is of the right form.  What remains is to prove \eqref{Fjk}, which expresses the $F$-polynomials in terms of order ideals.  This proof is developed in the next several sections.  Before moving on we show how Theorem~ \ref{thmTk} can be used to derive a similar formula expressing the iterates of the pentagram map in the $x$-coordinates. 

\begin{thm}
Let $A \in \mathcal{P}_n$, $x_j = x_j(A)$, and $y_j = y_j(A)$.  Then $x_{j,k} = x_j(T^k(A))$ is given by 
\begin{equation} \label{Tkx}
x_{j,k} = \begin{cases}
             x_{j-3k}\left(\displaystyle\prod_{i=-k}^{k-1} y_{j+1+3i}\right)\dfrac{F_{j+2,k-1}F_{j-3,k}}{F_{j-2,k-1}F_{j+1,k}}, & j+k \textrm{ even} \\
             x_{j+3k}\left(\displaystyle\prod_{i=-k}^{k-1} y_{j+1+3i}\right)\dfrac{F_{j-3,k-1}F_{j+2,k}}{F_{j+1,k-1}F_{j-2,k}}, & j+k \textrm{ odd} \\
          \end{cases}
\end{equation}
\end{thm}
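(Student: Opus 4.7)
Write $\tilde{x}_{j,k}$ for the right-hand side of \eqref{Tkx}. The plan is to reduce the theorem to two verifications: (I) that $\tilde{x}_{j,k}\tilde{x}_{j+1,k}=x_{j,k}x_{j+1,k}$ for every $j$, and (II) that $\tilde{x}_{j_0,k}=x_{j_0,k}$ for at least one $j_0$ per parity class. Since \eqref{yvertexx}--\eqref{yedgex} applied to $T^k(A)$ give $x_{j,k}x_{j+1,k}=-y_{j,k}^{\epsilon(j,k)}$ (with $\epsilon(j,k)=-1$ when $j+k$ is even and $+1$ otherwise), and Theorem \ref{thmTk} makes $y_{j,k}$ explicit, (I) becomes a purely algebraic identity. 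Given (I), the ratio $c_{j,k}:=\tilde{x}_{j,k}/x_{j,k}$ satisfies $c_{j,k}c_{j+1,k}=1$, so it depends only on the parity of $j$ at fixed $k$; (II) then forces $c_{j,k}\equiv 1$.

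For (I), multiply the formulas for $\tilde{x}_{j,k}$ and $\tilde{x}_{j+1,k}$. Because $j+k$ and $(j+1)+k$ have opposite parities, these two use opposite cases of \eqref{Tkx}, and a direct inspection shows that the $F$-polynomial factors involving $F_{j\pm 2,k-1}$ (in the case $j+k$ even) or $F_{j\pm 2,k}$ (in the case $j+k$ odd) cancel completely. What survives matches precisely the $F$-polynomial ratio appearing in $y_{j,k}^{\epsilon(j,k)}$ via Theorem \ref{thmTk}. It remains to verify that the product
\[
x_{j\mp 3k}\,x_{j+1\pm 3k}\cdot\prod_{i=-k}^{k-1}y_{j+1+3i}\,y_{j+2+3i}
\]
equals the monomial part of $-y_{j,k}^{\epsilon(j,k)}$. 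This is a telescoping identity: using \eqref{yvertexx}--\eqref{yedgex} at level $0$, one checks that for any integers $a\le b$ with $b-a$ even, $\prod_{m=a}^{b}y_{m}^{-1}=-(x_{a}x_{b+1})^{\pm 1}$, where the exponent is determined by the parity of $a$. Choosing $a=j-3k,\,b=j+3k$ in the case $j+k$ even, and $a=j+1-3k,\,b=j+3k-1$ in the case $j+k$ odd, then yields the desired identity.

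For (II), I induct on $k$. The base case $k=0$ is immediate: the conventions $F_{j,-1}=1$ for $j$ even, $F_{j,0}=1$ for $j$ odd, and the empty-product convention together collapse \eqref{Tkx} to $x_{j,0}=x_j$. For the inductive step, apply Proposition \ref{propalpha12x} to express $x_{j,k}$ as $x_{j\pm 1,k-1}$ times a ratio of factors of the form $1-x_{\cdot,k-1}x_{\cdot,k-1}$. By the inductive hypothesis, each $x_{\cdot,k-1}$ equals $\tilde{x}_{\cdot,k-1}$, so one may substitute. Converting each product $x_{m,k-1}x_{m+1,k-1}$ into $-y_{m,k-1}^{\epsilon}$ via \eqref{yvertexx}--\eqref{yedgex} at level $k-1$, and expanding $y_{m,k-1}$ through Theorem \ref{thmTk}, reduces the step to an identity among $F$-polynomials of levels $k-2,k-1,k$ that follows from the recurrence \eqref{Fjkoddrec}.

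The main obstacle is step (I): the parity bookkeeping must be tracked consistently across the four combinations of parities of $j$ and $k$, and one must align the $F$-polynomial cancellations with the telescoping of boundary $x$-coordinates into the monomial $M_{j,k}$ of Theorem \ref{thmTk}. The exponent structure of \eqref{Tkx} has been tuned precisely so that these cancellations and telescopings fit together.
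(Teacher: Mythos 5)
Your proposal is correct, and its core computation is the same as the paper's: multiplying the two conjectured expressions for consecutive indices, cancelling the $F_{j\pm2}$ factors, and telescoping the boundary $x$'s against $\prod_{i=-3k}^{3k}y_{j+i}$ to recover $-y_{j,k}^{\pm1}$ via Theorem~\ref{thmTk} is exactly the paper's step (I). Where you genuinely diverge is in how the formula is pinned down once the consecutive products are known. The paper closes the argument with a global normalization: it invokes the Section~2 reconstruction discussion together with Schwartz's invariant-swapping ($E_k(T(A))=O_k(A)$, $O_k(T(A))=E_k(A)$) and checks that the proposed $x_{j,k}$ satisfy $x_{2,k}x_{4,k}\cdots x_{2n,k}=x_2x_4\cdots x_{2n}$ or $x_1x_3\cdots x_{2n-1}$ (all $F$'s cancel and $y_1\cdots y_{2n}=1$). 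You instead anchor by a direct inductive verification of a single coordinate using Proposition~\ref{propalpha12x}, then propagate by the parity argument $c_{j,k}c_{j+1,k}=1$. Your route buys independence from the pentagram invariants and from the reconstruction-uniqueness discussion (which, as the paper notes, has a sign ambiguity when $n$ is even that the paper's proof does not explicitly resolve); the price is a second nontrivial computation in the inductive step. That computation does close: writing $1-x_{m,k-1}x_{m+1,k-1}=1+y_{m,k-1}^{\pm1}$ and applying \eqref{Fjkrec} makes the $F$'s collapse to the ratio in \eqref{Tkx}, but the leftover monomial comparison also requires the level-$0$ identity $x_{a+2}=x_a\,y_a\,y_{a+1}$ (from \eqref{yvertexx}--\eqref{yedgex}) to convert the boundary coordinate $x_{j\pm1\mp3(k-1)}$ into $x_{j\mp3k}$; your sketch attributes the reduction solely to an $F$-polynomial identity, so be sure to include this boundary step, and note that once the inductive verification is written uniformly in $j$ it actually proves the statement outright, making step (I) a consistency check rather than a logical necessity.
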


\begin{proof}
Let $y_{j,k} = y_j(T^k(A))$.  Based on the discussion in Section~ 2, the $x_{j,k}$ are uniquely determined by the identities
$x_{j,k}x_{j+1,k} = -y_{j,k}^{-1}$ for $j+k$ even, $x_{j,k}x_{j+1,k} = -y_{j,k}$ for $j+k$ odd, and
\begin{displaymath}
x_{2,k}x_{4,k}\cdots x_{2n,k} = \begin{cases}
                                x_2x_4\cdots x_{2n}, & k \textrm{ even} \\ 
                                x_1x_3\cdots x_{2n-1}, & k \textrm{ odd} \\
                                \end{cases}
\end{displaymath}
As such, it suffices to verify that these identities hold if the $x_{j,k}$ are given by \eqref{Tkx}.  

If $j+k$ is even then
\begin{align*}
x_{j,k} &= x_{j-3k}\left(\prod_{i=-k}^{k-1} y_{j+1+3i}\right)\frac{F_{j+2,k-1}F_{j-3,k}}{F_{j-2,k-1}F_{j+1,k}} \\
x_{j+1,k} &= x_{j+3k+1}\left(\prod_{i=-k}^{k-1} y_{j+2+3i}\right)\frac{F_{j-2,k-1}F_{j+3,k}}{F_{j+2,k-1}F_{j-1,k}}.
\end{align*}
Therefore,
\begin{align*}
x_{j,k}x_{j+1,k} &= x_{j-3k}\left(\prod_{i=-k}^{k-1} y_{j+1+3i}y_{j+2+3i}\right)x_{j+3k+1} \frac{F_{j-3,k}F_{j+3,k}}{F_{j-1,k}F_{j+1,k}} \\
                 &= x_{j-3k}\left(\prod_{i=-3k}^{3k} y_{j+i}\right)x_{j+3k+1} \frac{F_{j-3,k}F_{j+3,k}}{M_{j,k}F_{j-1,k}F_{j+1,k}}.
\end{align*}
But 
\begin{align*}
\prod_{i=-3k}^{3k} y_{j+i} &= \frac{(-x_{j-3k+1}x_{j-3k+2})\cdots (-x_{j+3k-1}x_{j+3k})} {(-x_{j-3k}x_{j-3k+1})(-x_{j-3k+2}x_{j-3k+3})\cdots(-x_{j+3k}x_{j+3k+1})} \\
                           &= -\frac{1}{x_{j-3k}x_{j+3k+1}}
\end{align*}
by \eqref{yvertexx}--\eqref{yedgex}.  Therefore,
\begin{displaymath}
x_{j,k}x_{j+1,k} = -\frac{F_{j-3,k}F_{j+3,k}}{M_{j,k}F_{j-1,k}F_{j+1,k}} = -y_{j,k}^{-1}
\end{displaymath}
by \eqref{Tk}.  A similar calculation shows $x_{j,k}x_{j+1,k} = -y_{j,k}$ for $j+k$ odd.

Finally, in computing $x_{2,k}x_{4,k}\cdots x_{2n,k}$, all of the $F$-polynomials in \eqref{Tkx} cancel out.  Each of $y_1,\ldots, y_{2n}$ appear exactly $k$ times in the product, but $y_1\cdots y_{2n} = 1$ so the $y$-variables do not contribute either.  All that remain are the $x_{j-3k}$ or $x_{j+3k}$ as appropriate.  So the product equals $x_2x_4\cdots x_{2n}$ if $k$ is even or $x_1x_3\cdots x_{2n-1}$ if $k$ is odd.
\end{proof}

It will be convenient in the following sections to define $M_{j,k}$ and $F_{j,k}$ for all $j,k$ (as opposed to just for $j+k$ even or, respectively, odd).  More specifically, let $M_{j,k} = \prod_{i=-k}^k y_{j+3i}$ for all $j$ and $k$, $F_{j,-1}=F_{j,0}=1$ for all $j$, and 
\begin{equation} \label{Fjkrec}
F_{j,k+1} = \frac{F_{j-3,k}F_{j+3,k} + (\prod_{i=-k}^k y_{j+3i})F_{j-1,k}F_{j+1,k}}{F_{j,k-1}} 
\end{equation}
for all $j$ and $k$ with $k \geq 0$.  %It is clear from this definition that $F_{j+1,k}$ can be obtained from $F_{j,k}$ simplify by shifting the subscript of each $y$-variable up by 1.  Consequently, it will suffice to understand the polynomials $F_{0,k}$.

\section{Alternating sign matrix background}
An \emph{alternating sign matrix} is a square matrix of 1's, 0's, and -1's such that
\begin{itemize}
\item the non-zero entries of each row and column alternate in sign and
\item the sum of the entries of each row and column is 1.
\end{itemize}
Let $ASM(k)$ denote the set of $k$ by $k$ alternating sign matrices.  Alternating sign matrices are related to many other mathematical objects, including the posets $P_k$ used in the formula for the $F$-polynomials.  In this section, we explain the connection between these objects.

Recall that $Q_k$ is the set of triples $(r,s,t)\in \mathbb{Z}^3$ such that
\begin{displaymath}
2|s|-(k-2) \leq t \leq  k-2-2|r|
\end{displaymath}
and
\begin{displaymath}
2|s|-(k-2) \equiv t \equiv  k-2-2|r| \pmod{4}.
\end{displaymath}
Note that the first condition implies $|r|+|s| \leq k-2$, and the second implies $s+r \equiv k \pmod{2}$.  For each pair $(r,s)$ satisfying these properties, there are $\frac{1}{2}(k-|r|-|s|)$ points $(r,s,t) \in Q_k$, each separated by 4 units in the $t$-direction.  

Note that $Q_k$ and $Q_{k+1}$ are disjoint.  Let $P_k = Q_{k+1} \cup Q_k$.  Define a partial order on $P_k$ by saying that $(r',s',t')$ covers $(r,s,t)$  if and only if $t' = t + 1$ and $|r'-r| + |s'-s| = 1$.  The partial order on $P_k$ restricts to a partial order on $Q_k$.  

A bijection is given by Elkies, Kuperberg, Larsen, and Propp in \cite{EKLP} between $ASM(k)$ and $J(Q_k)$, the set of order ideals of $Q_k$.  This bijection is defined in several steps.  Given an order ideal $I$ of $Q_k$, associate to $I$ the height function $H$ defined by
\begin{displaymath}
H(r,s) = \begin{cases}
k+2 + \min\{t : (r,s,t) \in I\}, & \textrm{ if such a $t$ exists} \\
2|s|,                            & \textrm{ otherwise}
\end{cases}
\end{displaymath}
From $H$ construct a matrix $A^*$ with entries
\begin{displaymath}
a_{ij}^* = \frac{1}{2}H(-k+i+j,-i+j).
\end{displaymath}
Finally, the alternating sign matrix $A$ corresponding to $I$ is defined to be the matrix with entries
\begin{displaymath}
a_{ij} = \frac{1}{2}(a_{i-1,j}^* + a_{i,j-1}^* - a_{i-1,j-1}^* - a_{i,j}^*).
\end{displaymath}

As an example, the poset $Q_3$ (see Figure \ref{figureQ3}) has seven order ideals.  Table \ref{tableBijection} illustrates a couple instances of the bijection of $J(Q_3)$ with $ASM(3)$.

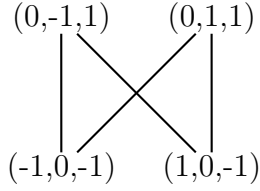
\begin{figure}
\begin{pspicture}(5,3)
\rput(1,2.5){\rnode{v1}{(0,-1,1)}}
\rput(3,2.5){\rnode{v2}{(0,1,1)}}
\rput(1,0.5){\rnode{v4}{(-1,0,-1)}}
\rput(3,0.5){\rnode{v5}{(1,0,-1)}}

%\psset{nodesep=2pt}
\ncline{v4}{v1}
\ncline{v4}{v2}
\ncline{v5}{v1}
\ncline{v5}{v2}
%\psset{nodesep=0pt}
\end{pspicture}
\caption{The poset $Q_3$}
\label{figureQ3}
\end{figure}

\begin{table}
\begin{tabular}{|c|c|c|}
\hline
$I$ & $\emptyset$ & $\{(-1,0,-1),(1,0,-1)\}$ \\
\hline
$H(r,s)$ &
$\begin{array}{ccccccc}
            &  &  & 6 &  &  & \\
            &  & 4 &  & 4 &  & \\
            & 2 &  & 2 &  & 2 & \\
           0 &  & 0 &  & 0 &  & 0\\
            & 2 &  & 2 &  & 2 & \\
            &  & 4 &  & 4 &  & \\
            &  &  & 6 &  &  & \\
\end{array}$ &
$\begin{array}{ccccccc}
           &  &  & 6 &  &  & \\
           &  & 4 &  & 4 &  & \\
           & 2 &  & 2 &  & 2 & \\
          0 &  & 4 &  & 4 &  & 0\\
           & 2 &  & 2 &  & 2 & \\
           &  & 4 &  & 4 &  & \\
           &  &  & 6 &  &  & \\
\end{array}$\\
\hline
& & \\
$A^*$ & 
$\left[ \begin{array}{cccc}
            0 & 1 & 2 & 3\\
            1 & 0 & 1 & 2\\
            2 & 1 & 0 & 1\\
            3 & 2 & 1 & 0\\
\end{array}\right]$ & 
$\left[ \begin{array}{cccc}
            0 & 1 & 2 & 3\\
            1 & 2 & 1 & 2\\
            2 & 1 & 2 & 1\\
            3 & 2 & 1 & 0\\
\end{array}\right]$\\
& & \\
\hline
& & \\
$A$ &
$\left[ \begin{array}{ccc}
            1 & 0 & 0 \\
            0 & 1 & 0 \\
            0 & 0 & 1 \\
\end{array}\right]$  &
$\left[ \begin{array}{ccc}
            0 & 1 & 0 \\
            1 & -1 & 1 \\
            0 & 1 & 0 \\
\end{array}\right] $\\
& & \\
\hline
\end{tabular}
\caption{Two examples illustrating the bijection between $J(Q_3)$ and $ASM(3)$.  The values of $H(r,s)$ are given for $r+s$ odd and $|r|+|s| \leq 3$ with $r$ increasing from left to right and $s$ from bottom to top.}
\label{tableBijection}
\end{table}

\begin{rem} The intermediate objects $H$ and $A^*$ in the bijection are themselves of interest.  The function $H$ is the so-called height function of a domino tiling in \cite{EKLP}.  In that paper, the posets $P_k$ and $Q_k$ are shifted upward, eliminating the need to add $k+2$ in the definition of $H$.  The $(n+1) \times (n+1)$ matrix $A^*$ (with row and column index starting at 0) is called the skew summation of $A$.  
\end{rem}

Call order ideals $I \subseteq Q_{k+1}$ and $J \subseteq Q_{k}$ compatible if $I \cup J$ is an order ideal of $P_k = Q_{k+1} \cup Q_k$.  Call alternating sign matrices $A \in ASM(k+1)$ and $B \in ASM(k)$ compatible if they correspond under the above bijection to compatible order ideals.  

This compatibility condition was introduced by D. Robbins and H. Rumsey in their study of a class of recurrences which includes the octahedron recurrence \cite{RR}.  A three-dimensional array of quantities $f_{i,j,k}$ is said to satisfy the \emph{octahedron recurrence} if
\begin{displaymath}
f_{i,j,k-1}f_{i,j,k+1} = f_{i-1,j,k}f_{i+1,j,k} + f_{i,j-1,k}f_{i,j+1,k}
\end{displaymath}
for all $(i,j,k) \in \mathbb{Z}^3$.  

Let $k \geq 1$ and consider the expression for $f_{0,0,k}$ in terms of the $(f_{i,j,-1})$ and $(f_{i,j,0})$.  It is easy to check that $f_{0,0,k}$ only depends on 
\begin{displaymath}
(f_{i,j,0} : |i|+|j| \leq k, i+j \equiv k \pmod{2})
\end{displaymath}
and
\begin{displaymath}
(f_{i,j,-1} : |i|+|j| \leq k-1, i+j-1 \equiv k \pmod{2}).
\end{displaymath}
Rotating by 45 degrees, the relevant initial values can be stored in the matrices
\begin{displaymath}
X_{k+1} = \left[ \begin{array}{cccc}
            f_{-k,0,0} & f_{-k+1,1,0} & \cdots & f_{0,k,0} \\
            f_{-k+1,-1,0} & f_{-k+2,0,0} & \cdots & f_{1,k-1,0} \\
            \vdots & \vdots & & \vdots \\
            f_{0,-k,0} & f_{1,-k+1,0} & \cdots & f_{k,0,0} \\
\end{array}\right]
\end{displaymath}
and
\begin{displaymath}
Y_k = \left[ \begin{array}{cccc}
            f_{-k+1,0,-1} & f_{-k+2,1,-1} & \cdots & f_{0,k-1,-1} \\
            f_{-k+2,-1,-1} & f_{-k+3,0,-1} & \cdots & f_{1,k-2,-1} \\
            \vdots & \vdots & & \vdots \\
            f_{0,-k+1,-1} & f_{1,-k+2,-1} & \cdots & f_{k-1,0,-1} \\
\end{array}\right].
\end{displaymath}

In the following, the notation $X^A$, with $X$ and $A$ matrices of the same dimensions, represents the product $\displaystyle\prod_i \displaystyle\prod_j x_{ij}^{a_{ij}}$.

\begin{prop}[{\cite[Theorem~ 1]{RR}}] \label{propOctASM}
Suppose $(f_{i,j,k})$ is a solution to the octahedron recurrence and let $k \geq 1$.  Then
\begin{displaymath}
f_{0,0,k} = \sum_{A, B}(X_{k+1})^A(Y_k)^{-B}
\end{displaymath}
where the sum is over all compatible pairs $A \in ASM(k+1)$, $B \in ASM(k)$.
\end{prop}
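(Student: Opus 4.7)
My plan is induction on $k$, using the octahedron recurrence at the apex $(0,0,k)$ together with the EKLP bijection between compatible pairs of alternating sign matrices and order ideals of $P_k = Q_{k+1} \cup Q_k$ recalled earlier in this section.

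The base case $k=1$ is direct: $ASM(1)=\{[1]\}$ and $ASM(2)$ consists of the identity and the antidiagonal permutation matrix, yielding the two-term sum $(f_{-1,0,0}f_{1,0,0}+f_{0,-1,0}f_{0,1,0})/f_{0,0,-1}$, which matches the single application of the octahedron recurrence. For the inductive step, write $R_m^{(i,j)}$ for the claimed right-hand side shifted so that its apex sits at $(i,j,m)$, and set $R_m=R_m^{(0,0)}$. The octahedron recurrence at $(0,0,k)$, combined with the inductive hypothesis applied to $f_{0,0,k-1}$ and to the four neighbors $f_{\pm 1,0,k}$, $f_{0,\pm 1,k}$, reduces the claim for $f_{0,0,k+1}$ to the single quadratic identity
\[
R_{k-1}\cdot R_{k+1} \;=\; R_k^{(-1,0)}R_k^{(1,0)} + R_k^{(0,-1)}R_k^{(0,1)}.
\]

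Proving this identity is the main obstacle. I would attack it by translating through the EKLP bijection: each $R_m^{(i,j)}$ becomes a generating function $\sum_{I\in J(P_m)}\prod_{(r,s,t)\in I} w_{i,j}(r,s,t)$ over order ideals of an appropriately shifted copy of $P_m$, where the weights $w_{i,j}$ are read off from $X_{m+1}$ and $Y_m$ via the height-function reconstruction. The identity then becomes a bilinear statement about $J(P_{k-1})\times J(P_{k+1})$ versus two copies of $J(P_k)\times J(P_k)$, and the natural strategy is to build a weight-preserving involution pairing configurations on the two sides through local modifications concentrated near the apex elements (those of maximal $t$-coordinate) of the posets.

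The cleanest route, which I would pursue in parallel, is to reinterpret $R_m^{(i,j)}$ as the partition function of perfect matchings of an Aztec-diamond-type bipartite graph whose faces are indexed by $P_m$, with the entries of $X_{m+1}$ appearing as edge weights and those of $Y_m^{-1}$ bookkeeping an overall normalization. In this formulation, the required quadratic relation is precisely the instance of Kuo's four-vertex graphical condensation lemma obtained by marking the four boundary vertices adjacent to the apex face, so the identity drops out without the need to construct the combinatorial involution by hand. Once the quadratic relation is established by either route, the induction closes and the proposition follows.
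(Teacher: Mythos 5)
The paper does not prove this proposition at all: it is quoted as Theorem~1 of Robbins--Rumsey \cite{RR}, whose original argument is an algebraic induction on a closed form for the (more general) $\lambda$-Dodgson recurrence. So your attempt has to be judged on its own, and as it stands it has a genuine gap. Your inductive frame is fine: translation invariance lets you apply the hypothesis at shifted apexes, and the octahedron recurrence at $(0,0,k)$ does reduce the claim to the bilinear identity $R_{k-1}R_{k+1}=R_k^{(-1,0)}R_k^{(1,0)}+R_k^{(0,-1)}R_k^{(0,1)}$. But that identity \emph{is} the theorem --- it says precisely that the ASM-pair sums themselves satisfy the octahedron recurrence --- and neither of your two routes to it is actually carried out. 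The ``weight-preserving involution concentrated near the apex elements'' is a hope, not an argument: this is a genuinely bilinear identity between products of generating functions over $J(P_{k-1})\times J(P_{k+1})$ and $J(P_k)\times J(P_k)$, and no simple local involution is known; the known combinatorial proofs go through superposition of matchings and path/cycle swapping, i.e.\ essentially through condensation.

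The Kuo-condensation route can be made to work, but everything you would need to check is exactly what is omitted. You must first produce a weight-preserving bijection from compatible pairs $(A,B)$ with weight $(X_{k+1})^A(Y_k)^{-B}$ to perfect matchings (domino tilings of an Aztec diamond of order $k$) with a multiplicative edge-weighting read off from the $f_{i,j,0}$ and $f_{i,j,-1}$; this is a nontrivial translation through the EKLP height-function correspondence, not a bookkeeping remark, and the $Y_k^{-B}$ factor in particular has to be absorbed into edge weights rather than ``an overall normalization'' if the six partition functions in Kuo's identity are to refer to subgraphs of one fixed weighted graph. Second, your description of where to apply Kuo's lemma is off: the apex $(0,0,k+1)$ lives in the recursion direction and is not a face of the graph; the four marked vertices must be chosen on the outer boundary (at the four corners of the order-$(k+1)$ diamond), with the two deletion patterns producing the two order-$k$ regions centered at $(\pm1,0)$ and $(0,\pm1)$ and the full deletion producing the order-$(k-1)$ region. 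Verifying that these vertex deletions reproduce, with their inherited edge weights, exactly the translated generating functions $R_k^{(\pm1,0)}$, $R_k^{(0,\pm1)}$, $R_{k-1}$ is the substantive step that closes the induction, and without it the proposal is a plan rather than a proof.
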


\section{Computation of the $F$-polynomials}
This section proves the formula for the $F$-polynomials given in \eqref{Fjk}.  

Define Laurent monomials $m_{i,j,k}$ for $k \geq -1$ recursively as follows.  Let
\begin{equation} \label{mij0}
m_{i,j,0} = \prod_{l=0}^{j-1}\prod_{m=0}^ly_{3i+j-4l+6m-1}
\end{equation}
and $m_{i,j,-1} = 1/m_{i,j,0}$ for all $i,j \in \mathbb{Z}$.  For $k \geq 1$, put
\begin{equation} \label{mijk}
m_{i,j,k} = \frac{m_{i-1,j,k-1}m_{i+1,j,k-1}}{m_{i,j,k-2}}.
\end{equation}

Note that in \eqref{mij0}, if $j \leq 0$ the conventions for products mentioned in the introduction are needed.  Applying these conventions and simplifying yields $m_{i,-1,0} = m_{i,0,0} = 1$ and
\begin{displaymath}
m_{i,j,0} = \prod_{l=j}^{-2}\prod_{m=l+1}^{-1}y_{3i+j-4l+6m-1} 
\end{displaymath}
for $j \leq -2$.  A portion of the array $m_{i,j,0}$ is given in Figure \ref{figuremij0}.

\begin{figure}
\begin{displaymath}
\begin{array}{rccccc}
& & \vdots & \vdots & \vdots & \\
(j=3) & \cdots & y_{-9}y_{-5}y_{-3}y_{-1}y_1y_3 & y_{-6}y_{-2}y_0y_2y_4y_6 & y_{-3}y_1y_3y_5y_7y_9  & \cdots \\
(j=2) & \cdots & y_{-6}y_{-2}y_0 & y_{-3}y_1y_3 & y_0y_4y_6  & \cdots \\
(j=1) & \cdots & y_{-3} & y_0 & y_3 & \cdots \\
(j=0) & \cdots & 1 & 1 & 1 & \cdots \\
(j=-1) & \cdots & 1 & 1 & 1 & \cdots \\
(j=-2) & \cdots & y_{-4} & y_{-1} & y_2 & \cdots \\
(j=-3) & \cdots & y_{-7}y_{-5}y_{-1} & y_{-4}y_{-2}y_2 & y_{-1}y_1y_5  & \cdots \\
& & \vdots & \vdots & \vdots & \\
& & (i=-1) & (i=0) & (i=1) &
\end{array}
\end{displaymath}
\caption{The monomials $m_{i,j,0}$} \label{figuremij0}
\end{figure}

\begin{prop} \label{propMFOct}
Let $f_{i,j,k} = m_{i,j,k}F_{3i+j,k}$ for all $i,j,k$ with $k \geq -1$.  Then $(f_{i,j,k})$ is a solution to the octahedron recurrence.
\end{prop}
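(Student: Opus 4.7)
The plan is to substitute $f_{i,j,k} = m_{i,j,k}F_{3i+j,k}$ directly into the octahedron recurrence and use \eqref{Fjkrec} (with $j$ replaced by $3i+j$) to expand $F_{3i+j,k-1}F_{3i+j,k+1}$. After this expansion, both sides of the desired identity $f_{i,j,k-1}f_{i,j,k+1} = f_{i-1,j,k}f_{i+1,j,k} + f_{i,j-1,k}f_{i,j+1,k}$ become sums of two terms, each of which is a monomial times one of the products $F_{3i+j-3,k}F_{3i+j+3,k}$ or $F_{3i+j-1,k}F_{3i+j+1,k}$. Matching the monomial coefficients of these two products reduces the claim to a pair of monomial identities:
\begin{align*}
\text{(a)} &\quad m_{i,j,k-1}\,m_{i,j,k+1} = m_{i-1,j,k}\,m_{i+1,j,k}, \\
\text{(b)} &\quad m_{i,j-1,k}\,m_{i,j+1,k} = M_{3i+j,k}\,m_{i-1,j,k}\,m_{i+1,j,k}.
\end{align*}

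Identity (a) is essentially a restatement of the defining relation \eqref{mijk}: for $k \geq 1$ it is \eqref{mijk} with $k$ replaced by $k+1$, and the $k = 0$ case follows from the initial condition $m_{i,j,-1} = 1/m_{i,j,0}$ together with \eqref{mijk} at $k = 1$.

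Identity (b) is where the work lies. The plan is to introduce
\[
N_{i,j,k} = \frac{m_{i,j-1,k}\,m_{i,j+1,k}}{m_{i-1,j,k}\,m_{i+1,j,k}},
\]
so the goal becomes $N_{i,j,k} = M_{3i+j,k}$. Expanding each of the four factors in $N_{i,j,k}$ via \eqref{mijk} and regrouping yields
\[
N_{i,j,k} = \frac{N_{i-1,j,k-1}\,N_{i+1,j,k-1}}{N_{i,j,k-2}} \qquad (k \geq 1),
\]
and a short interval calculation using $M_{j,k} = \prod_{\ell=-k}^{k} y_{j+3\ell}$ shows that $M_{3i+j,k}$ obeys exactly the same recurrence $M_{3i+j,k}\,M_{3i+j,k-2} = M_{3(i-1)+j,k-1}\,M_{3(i+1)+j,k-1}$ in $(i,k)$ with $j$ fixed. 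By induction on $k$, it then suffices to verify the base cases $N_{i,j,0} = y_{3i+j}$ and $N_{i,j,-1} = 1/y_{3i+j}$, the second of which is immediate from the first using $m_{i,j,-1} = 1/m_{i,j,0}$.

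The main obstacle is thus the single base identity
\[
m_{i,j-1,0}\,m_{i,j+1,0} = y_{3i+j}\,m_{i-1,j,0}\,m_{i+1,j,0},
\]
which must be verified directly from the explicit formula \eqref{mij0}. The substitution $(l,m) \mapsto (a,b) = (l-m,\,m)$ rewrites $m_{i,j,0}$ as $\prod_{a,b \geq 0,\, a+b \leq j-1} y_{3i+j-1-4a+2b}$ for $j \geq 0$, after which each side of the identity becomes a multiset of indices of the form $3i+j+c$ and the problem reduces to a combinatorial matching of these multisets (the case $j \leq 0$ is treated analogously using the product conventions of the introduction). I expect this bookkeeping check, while presenting no structural difficulty, to be the most tedious part of the argument.
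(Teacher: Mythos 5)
Your proposal is correct and follows essentially the same route as the paper: the paper also reduces the claim to your identities (a) and (b), proves (b) by showing that the ratio you call $N_{i,j,k}$ (its $a_{i,j,k}$ in Lemma~\ref{lemmRec}) satisfies the same recurrence as $M_{3i+j,k}$, and checks the base cases $k=0,-1$ from \eqref{mij0}. The only difference is cosmetic: for the base case the paper avoids your multiset bookkeeping by computing the telescoping ratios $m_{i,j+1,0}/m_{i-1,j,0}=\prod_{l=0}^{j}y_{3i+j+2l}$ and $m_{i+1,j,0}/m_{i,j-1,0}=\prod_{l=0}^{j-1}y_{3i+j+2l+2}$, whose quotient is $y_{3i+j}$ directly.
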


\begin{proof}
Fix $i,j,k$ with $k \geq 0$.  Then
\begin{displaymath}
f_{i,j,k-1}f_{i,j,k+1} = (m_{i,j,k-1}m_{i,j,k+1})(F_{3i+j,k-1}F_{3i+j,k+1}).
\end{displaymath}
By \eqref{mijk} and \eqref{Fjkrec} we have
\begin{align*}
m_{i,j,k-1}m_{i,j,k+1} &=  m_{i-1,j,k}m_{i+1,j,k} \\
F_{3i+j,k-1}F_{3i+j,k+1} &=  F_{3i+j-3,k}F_{3i+j+3,k} + (M_{3i+j,k})F_{3i+j-1,k}F_{3i+j+1,k}. 
\end{align*}
Multiplying yields
\begin{equation} \label{MFOct}
f_{i,j,k-1}f_{i,j,k+1} = f_{i-1,j,k}f_{i+1,j,k} + (m_{i-1,j,k}m_{i+1,j,k}M_{3i+j,k})F_{3i+j-1,k}F_{3i+j+1,k}.
\end{equation}

\begin{lem} \label{lemmRec}
For all $i,j,k$ with $k \geq -1$,
\begin{displaymath}
\frac{m_{i,j-1,k}m_{i,j+1,k}}{m_{i-1,j,k}m_{i+1,j,k}} = M_{3i+j,k}
\end{displaymath}
for $M$ as defined in Section~ 4.
\end{lem}

\begin{proof}
Let $a_{i,j,k} = \frac{m_{i,j-1,k}m_{i,j+1,k}}{m_{i-1,j,k}m_{i+1,j,k}}$.  Applying the recurrence \eqref{mijk} defining the $m_{i,j,k}$ to each of the four monomials on the right hand side shows that the $a_{i,j,k}$ satisfy the same recurrence:
\begin{displaymath}
a_{i,j,k} = \frac{a_{i-1,j,k-1}a_{i+1,j,k-1}}{a_{i,j,k-2}}
\end{displaymath}
We want to show $a_{i,j,k} = M_{3i+j,k}$.  The fact that
\begin{displaymath}
M_{3i+j,k} = \frac{M_{3i+j-3,k-1}M_{3i+j+3,k-1}}{M_{3i+j,k-2}}
\end{displaymath}
was demonstrated in the proof of Proposition~ \ref{propMjk}.  All that remains is to check the initial conditions $k=0$ and $k=-1$. 

From \eqref{mij0} it is easy to show that $m_{i,j+1,0}/m_{i-1,j,0} = \prod_{l=0}^jy_{3i+j+2l}$.  Shifting yields $m_{i+1,j,0}/m_{i,j-1,0} = \prod_{l=0}^{j-1}y_{3i+j+2l+2}$.  Therefore,
\begin{displaymath}
a_{i,j,0} = \frac{m_{i,j+1,0}/m_{i-1,j,0}}{m_{i+1,j,0}/m_{i,j-1,0}} 
          = \frac{\prod_{l=0}^jy_{3i+j+2l}}{\prod_{l=0}^{j-1}y_{3i+j+2l+2}} 
          = y_{3i+j} 
          = M_{3i+j,0}.
\end{displaymath}
Taking the reciprocal of both sides yields 
\begin{displaymath}
a_{i,j,-1} = 1/a_{i,j,0} = 1/y_{3i+j} = M_{3i+j,-1}.
\end{displaymath}
So, $a_{i,j,k} = M_{3i+j,k}$ for all $k \geq -1$, as desired.
\end{proof}
By Lemma~ \ref{lemmRec}, $m_{i-1,j,k}m_{i+1,j,k}M_{3i+j,k} = m_{i,j-1,k}m_{i,j+1,k}$.  Substituting this into \eqref{MFOct} yields
\begin{align*}
f_{i,j,k-1}f_{i,j,k+1} &= f_{i-1,j,k}f_{i+1,j,k} + m_{i,j-1,k}m_{i,j+1,k}F_{3i+j-1,k}F_{3i+j+1,k} \\
                       &= f_{i-1,j,k}f_{i+1,j,k} + f_{i,j-1,k}f_{i,j+1,k}
\end{align*}
which completes the proof of Proposition~ \ref{propMFOct}.
\end{proof}

The general solution to the octahedron recurrence given in Proposition \ref{propOctASM} in particular applies to this solution. Now $m_{i,0,k} = 1$ for all $i$ and $k$, so $f_{0,0,k} = m_{0,0,k}F_{0,k} = F_{0,k}$.  For all $i$ and $j$, we have $f_{i,j,0} = m_{i,j,0}$ (since $F_{3i+j,0} = 1$), so the matrix $X_{k+1}$ is given by
\begin{equation} \label{Xm}
X_{k+1} = \left[ \begin{array}{cccc}
            m_{-k,0,0} & m_{-k+1,1,0} & \cdots & m_{0,k,0} \\
            m_{-k+1,-1,0} & m_{-k+2,0,0} & \cdots & m_{1,k-1,0} \\
            \vdots & \vdots & & \vdots \\
            m_{0,-k,0} & m_{1,-k+1,0} & \cdots & m_{k,0,0} \\
\end{array}\right].
\end{equation}
Finally $f_{i,j,-1} = m_{i,j,-1} = 1/m_{i,j,0}$ for all $i$ and $j$.  It follows that each entry of $Y_{k}$ is the reciprocal of the corresponding entry of $X_{k}$.  So, $(Y_k)^{-B} = X_k^B$ for any $B \in ASM(k-1)$.  This proves the following result.

\begin{prop} \label{propFASM}
Using the $m_{i,j,0}$ as the entries of $X_{k+1}$ and $X_k$ as in \eqref{Xm}
\begin{equation} \label{FASM}
F_{0,k} = \sum_{A,B}(X_{k+1})^A(X_k)^{B}
\end{equation}
where the sum is over all compatible pairs $A \in ASM(k+1)$, $B \in ASM(k)$.
\end{prop}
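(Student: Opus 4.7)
The plan is to apply the general Robbins--Rumsey formula (Proposition~\ref{propOctASM}) to the specific solution of the octahedron recurrence constructed in Proposition~\ref{propMFOct}, namely $f_{i,j,k} = m_{i,j,k} F_{3i+j,k}$. Nearly all of the technical work has already been done in Proposition~\ref{propMFOct}, so what remains is to identify, for this particular solution, the two quantities on each side of the Robbins--Rumsey formula: the left-hand side $f_{0,0,k}$, and the initial-data matrices $X_{k+1}$ and $Y_k$.

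First I would verify the reduction $f_{0,0,k} = F_{0,k}$, which amounts to showing $m_{0,0,k} = 1$ for all $k \geq -1$. The base cases $m_{0,0,0} = 1$ and $m_{0,0,-1} = 1$ are immediate from \eqref{mij0}, since the double product defining $m_{i,0,0}$ is empty. More generally, $m_{i,0,0} = 1$ for every $i$, and then the recurrence \eqref{mijk} preserves the property along the $i=0$ line, giving $m_{0,0,k} = 1$ by induction on $k$.

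Next I would identify the matrices $X_{k+1}$ and $Y_k$ for this solution. Since $F_{j,0} = 1$ for all $j$, the initial values on the $k=0$ slice are $f_{i,j,0} = m_{i,j,0}$; comparing with the definition of $X_{k+1}$ gives the matrix displayed in \eqref{Xm}. Similarly, $F_{j,-1} = 1$ for all $j$ forces $f_{i,j,-1} = m_{i,j,-1} = 1/m_{i,j,0}$, so $Y_k$ has entries that are the pointwise reciprocals of the entries of $X_k$. Consequently, for any $B \in ASM(k)$, the monomial $(Y_k)^{-B}$ equals $(X_k)^B$.

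Substituting $f_{0,0,k} = F_{0,k}$ and $(Y_k)^{-B} = (X_k)^B$ into the Robbins--Rumsey expansion then produces \eqref{FASM} directly. I do not expect a genuine obstacle here: the content of the proposition is essentially the observation that the specific solution to the octahedron recurrence built in Proposition~\ref{propMFOct} has initial data compatible with the matrices $X_{k+1}$ and $X_k$. The real work, namely rewriting this ASM sum as a sum over order ideals of $P_k$ to yield the formula \eqref{Fjk} for $F_{j,k}$, is postponed to the subsequent sections.
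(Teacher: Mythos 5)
Your proposal is correct and follows essentially the same route as the paper: apply the Robbins--Rumsey formula (Proposition~\ref{propOctASM}) to the solution $f_{i,j,k}=m_{i,j,k}F_{3i+j,k}$ of Proposition~\ref{propMFOct}, use $m_{i,0,k}=1$ to get $f_{0,0,k}=F_{0,k}$, read off $X_{k+1}$ from $f_{i,j,0}=m_{i,j,0}$, and observe that $Y_k$ has entrywise reciprocal entries of $X_k$ so that $(Y_k)^{-B}=(X_k)^B$. The only quibble is a harmless misstatement in your induction for $m_{0,0,k}=1$: the invariant you need to propagate via \eqref{mijk} is $m_{i,0,k}=1$ for \emph{all} $i$ (the whole $j=0$ plane), not just along $i=0$, and your base case $m_{i,0,0}=1$ for every $i$ (together with $m_{i,0,-1}=1$) already supplies exactly that.
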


As an example, let $k=2$.  Then
\begin{displaymath}
X_2 = \left[ \begin{array}{cc}
m_{-1,0,0} & m_{0,1,0} \\
m_{0,-1,0} & m_{1,0,0} \\
\end{array}\right]
= \left[ \begin{array}{cc}
1 & y_0 \\
1 & 1 \\
\end{array}\right]
\end{displaymath}
and
\begin{displaymath}
X_3 = \left[ \begin{array}{ccc}
            m_{-2,0,0} & m_{-1,1,0}  & m_{0,2,0} \\
            m_{-1,-1,0} & m_{0,0,0}  & m_{1,1,0} \\
            m_{0,-2,0} & m_{1,-1,0} &  m_{2,0,0} \\
\end{array}\right]
= \left[ \begin{array}{ccc}
            1      & y_{-3} & y_{-3}y_1y_3 \\
            1      & 1      & y_3 \\
            y_{-1} & 1      & 1 \\
\end{array}\right].
\end{displaymath}
There are two elements $B \in ASM(2)$, namely the two permutation matrices, so $X_2^B$ is either 1 or $y_0$.  The seven alternating sign matrices $A \in ASM(3)$ are listed in Table \ref{tableASM3}.  The associated monomials $X_3^A$ are $1$,$y_{-3}$,$y_3$ (top row), $y_{-3}y_3$ (middle row), and $y_{-3}y_3y_{-1}, y_{-3}y_3y_1, y_{-3}y_3y_{-1}y_1$ (bottom row).  

\begin{table}
\begin{displaymath}
\begin{array}{ccc}
\hline \\
\left[ \begin{array}{ccc}
            1 & 0 & 0 \\
            0 & 1 & 0 \\
            0 & 0 & 1 \\
\end{array}\right] 
&
\left[ \begin{array}{ccc}
            0 & 1 & 0 \\
            1 & 0 & 0 \\
            0 & 0 & 1 \\
\end{array}\right] &
\left[ \begin{array}{ccc}
            1 & 0 & 0 \\
            0 & 0 & 1 \\
            0 & 1 & 0 \\
\end{array}\right] \smallskip \\
\hline \\
&
\left[ \begin{array}{ccc}
            0 & 1 & 0 \\
            1 & -1 & 1 \\
            0 & 1  & 0 \\
\end{array}\right]& \smallskip \\
\hline \\
\left[ \begin{array}{ccc}
            0 & 1 & 0 \\
            0 & 0 & 1 \\
            1 & 0 & 0 \\
\end{array}\right]&
\left[ \begin{array}{ccc}
            0 & 0 & 1 \\
            1 & 0 & 0 \\
            0 & 1 & 0 \\
\end{array}\right]&
\left[ \begin{array}{ccc}
            0 & 0 & 1 \\
            0 & 1 & 0 \\
            1 & 0 & 0 \\
\end{array}\right] \smallskip \\
\hline
\end{array}
\end{displaymath}
\caption{The seven elements of $ASM(3)$} \label{tableASM3}
\end{table}

The compatibility condition for $ASM(3)$ and $ASM(2)$ is as follows.  The three matrices in the top row of Table \ref{tableASM3} are related only to the identity matrix.  The three matrices in the bottom row are related only to the other element of $ASM(2)$.  Lastly, the middle matrix is related to both elements of $ASM(2)$.  The resulting formula is
\begin{align*}
F_{0,2} &= (1 + y_{-3} + y_{3} + y_{-3}y_3)1 \\
        &\quad + (y_{-3}y_3 + y_{-3}y_3y_{-1} + y_{-3}y_3y_1 +y_{-3}y_3y_{-1}y_1)y_0 \\
        &= (1+y_{-3})(1+y_3) + y_{-3}y_0y_3(1+y_{-1})(1+y_1)
\end{align*}
which matches \eqref{Fj2}.

Using the bijection between alternating sign matrices and order ideals from the previous section, \eqref{FASM} can be expressed in terms of order ideals.  Associate to each triple $(r,s,t) \in \mathbb{Z}^3$ the variable $y_{3r+s}$.  Define the weight of a finite subset $I$ of $\mathbb{Z}^3$ (particularly an order ideal of $Q_k$ or $P_k$) to be the monomial
\begin{displaymath}
\wt(I) = \prod_{(r,s,t) \in I}y_{3r+s}.
\end{displaymath}

\begin{prop} \label{propASMIdeals}
If $I \subseteq Q_k$ is an order ideal and $A \in ASM(k)$ is the associated alternating sign matrix then
\begin{displaymath} 
\wt(I) = X_k^A.
\end{displaymath}
\end{prop}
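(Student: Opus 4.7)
The plan is to compute $\log(X_k^A)$ directly by unwinding the chain of definitions $I \to H \to A^* \to A$, then compare it against $\log\wt(I)$ by factoring out the base case $I = \emptyset$. The essential tool is the specialization of Lemma~\ref{lemmRec} to $k=0$, which gives
\[
\log m_{R,S-1,0} + \log m_{R,S+1,0} - \log m_{R-1,S,0} - \log m_{R+1,S,0} = \log y_{3R+S}.
\]
Writing the entry of $X_k$ at position $(i,j)$ as $m_{i+j-k-1,\,j-i,\,0}$ for $1 \le i,j \le k$, the logarithm of $X_k^A$ is $\sum_{i,j} a_{ij}\,\log m_{i+j-k-1,\,j-i,\,0}$. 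Substituting $a_{ij} = \tfrac{1}{2}(a^*_{i-1,j} + a^*_{i,j-1} - a^*_{i-1,j-1} - a^*_{i,j})$ and switching the order of summation yields $\log(X_k^A) = \sum_{p,q=0}^k a^*_{p,q}\,c_{p,q}$, where $c_{p,q}$ is a signed combination of up to four neighboring $\log m$-values. Setting $R = p+q-k$ and $S = q-p$, a direct expansion shows that for matrix-interior $(p,q)$ with $1 \le p,q \le k-1$ the coefficient $c_{p,q}$ is exactly the discrete Laplacian appearing in the displayed identity above, so $c_{p,q} = \tfrac{1}{2}\log y_{3R+S}$; the correspondence $(p,q)\leftrightarrow(R,S)$ sends these interior indices bijectively onto the base points of non-empty columns of $Q_k$.

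To avoid explicit boundary bookkeeping I would compare $X_k^A$ with $X_k^{A_0}$, where $A_0 \in ASM(k)$ is the matrix associated to $I=\emptyset$. Along the boundary of the $(k+1)\times(k+1)$ grid indexing $A^*$, the value $a^*_{p,q}$ is determined by the $2|s|$ branch of $H$ and is hence independent of $I$, so $(a^* - a^*_0)_{p,q}$ is supported on the matrix-interior. The summation by parts then reduces to
\[
\log(X_k^A) - \log(X_k^{A_0}) = \sum_{(R,S)} \tfrac{H(R,S) - 2|S|}{4}\,\log y_{3R+S},
\]
the sum ranging over the column base points of $Q_k$. A short check confirms that $A_0$ is the identity permutation matrix, so $X_k^{A_0} = 1$. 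The proof thus reduces to the per-column identity $\tfrac{1}{4}(H(R,S) - 2|S|) = |\{t : (R,S,t) \in I\}|$, after which the right-hand side above collapses to $\sum_{(r,s,t)\in I}\log y_{3r+s} = \log\wt(I)$.

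This last identity relies on the observation that, within each column of $Q_k$, the elements are totally ordered by $t$ via chains in $P_k$ (passing through the auxiliary set $Q_{k+1}$ when direct covers in $Q_k$ are unavailable), and that each additional column element of $I$ advances $H(R,S)$ by exactly $4$. Establishing this column-totality and height-increment property is the main obstacle; everything else is linear bookkeeping with the summation by parts and the ASM-to-height-function dictionary.
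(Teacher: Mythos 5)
Your argument is correct in substance but takes a genuinely different route from the paper's. The paper proves the statement by induction on $|I|$: removing one maximal element $(r_0,s_0,t_0)$ changes $H$ by $4$ at a single point, hence $a^*$ by $2$ at one entry, hence $A$ at four entries, and Lemma~\ref{lemmRec} at level $0$ identifies the resulting multiplicative change in $X_k^A$ with $y_{3r_0+s_0}$. You instead do the whole computation at once by summation by parts, turning $\sum_{i,j}a_{ij}\log m_{i+j-k-1,j-i,0}$ into $\sum_{p,q}a^*_{p,q}c_{p,q}$ and recognizing the interior coefficients as $\tfrac12\log y_{3R+S}$ via the very same specialization of Lemma~\ref{lemmRec}; your boundary reduction is sound because the boundary entries of $a^*$ correspond to columns with $|R|+|S|=k$, which are empty, and $X_k^{A_0}=1$ since the diagonal of $X_k$ consists of $m_{i,0,0}=1$. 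What your global route buys is the clean closed form $\log X_k^A=\sum_{(R,S)}\tfrac14\bigl(H(R,S)-2|S|\bigr)\log y_{3R+S}$; what it costs is that you need the global statement $H(r,s)=2|s|+4\,\bigl|\{t:(r,s,t)\in I\}\bigr|$, where the paper only needs its one-element increment. On that remaining step, which you rightly flag as the crux: each column of $Q_k$ really is a chain, since $(r,s,t)<(r,s,t+4)$ via a four-step path through $Q_{k+1}$ of the shape $(r,s,t)\lessdot(r,s\pm1,t+1)\lessdot(r\pm1,s\pm1,t+2)\lessdot(r\pm1,s,t+3)\lessdot(r,s,t+4)$, so an order ideal meets each column in an initial segment (lowest $t$ first). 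One caveat: with the paper's displayed definition of $H$, which uses $\min\{t:(r,s,t)\in I\}$, your per-column identity would fail for columns containing more than one element of $I$ (the minimum is set by the first element added and never moves); but that definition is also inconsistent with the paper's own claim that adding an element raises $H$ by $4$, and the intended reading (consistent with \cite{EKLP}) uses $\max$, under which your identity is exactly right. So your proof is correct modulo the same convention the paper itself relies on.
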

\begin{proof}
The proof is by induction on the number of elements of $I$.  If $I$ is empty then $\wt(I) = 1$.  On the other hand, the height function corresponding to $I$ is $H(r,s) = 2|s|$ so the matrix $A^*$ has entries $a_{ij}^* = |-i+j|$.  It follows that $a_{ij} =1$ if $i=j$ and $a_{ij} = 0$ otherwise (i.e. $A$ is the identity matrix).  The diagonal entries of $X_k$  are all 1, so $X_k^A=1$.

Now suppose $I$ is non-empty and that the proposition holds for all smaller order ideals.  Then there is some $(r_0,s_0,t_0) \in I$ such that $I' = I - \{(r_0,s_0,t_0)\}$ is still an order ideal of $Q_k$.  By the induction hypothesis, $\wt(I') = X_k^{A'}$ where $A'$ is the alternating sign matrix corresponding to $I'$.  Clearly, $\wt(I) = y_{3r_0+s_0}\wt(I')$.  The addition of $(r_0,s_0,t_0)$ to obtain $I$ from $I'$ propagates through the bijection as follows:
\begin{align*}
I &= I' \cup \{(r_0,s_0,t_0)\} \\
H(r,s) &= \begin{cases}
H'(r,s) + 4, & (r,s) = (r_0,s_0) \\
H'(r,s),     & \textrm{ otherwise}
\end{cases}\\
a_{ij}^* &= \begin{cases}
a_{ij}^{*'} + 2, & (i,j)=(i_0,j_0) \\
a_{ij}^{*'},     & \textrm{ otherwise}
\end{cases}\\
a_{ij} &= \begin{cases}
a_{ij}' - 1, & (i,j) \in \{(i_0,j_0),(i_0+1,j_0+1)\} \\
a_{ij}' + 1, & (i,j) \in \{(i_0+1,j_0),(i_0,j_0+1)\} \\
a_{ij}',     & \textrm{ otherwise}
\end{cases}
\end{align*}
where $i_0,j_0$ are the integers satisfying $-k+i_0+j_0=r_0$, $-i_0+j_0 = s_0$.

The $(i_0,j_0)$ entry of $X_k$ is $m_{-k-1+i_0+j_0,j_0-i_0,0} = m_{r_0-1,s_0,0}$.  Similarly, the $(i_0,j_0+1)$ entry is $m_{r_0,s_0+1,0}$, the $(i_0+1,j_0)$ entry is $m_{r_0,s_0-1,0}$, and the $(i_0+1,j_0+1)$ entry is $m_{r_0+1,s_0,0}$.  These are the only entries where $A$ and $A'$ differ, so
\begin{align*}
X_k^A &= \frac{m_{r_0,s_0+1,0}m_{r_0,s_0-1,0}}{m_{r_0-1,s_0,0}m_{r_0+1,s_0,0}}X_k^{A'} \\
      &= y_{3r_0+s_0}X_k^{A'} 
\end{align*}
by Lemma~ \ref{lemmRec}.  Therefore $X_k^A = y_{3r_0+s_0}\wt(I') = \wt(I)$ as desired.
\end{proof}

\begin{ex}
Let $A$ be the matrix in the middle row of Table \ref{tableASM3}.  We have already seen that $X_3^A = y_{-3}y_3$.  According to Table \ref{tableBijection}, the corresponding order ideal is $I=\{(-1,0,-1),(1,0,-1)\}$, so $\wt(I)=y_{-3}y_3$ as well.
\end{ex}

\begin{thm}
\begin{displaymath}
F_{j,k} = \sum_{I \in J(P_k)} \prod_{(r,s,t)\in I} y_{3r+s+j}
\end{displaymath}
where $J(P_k)$ denotes the set of order ideals of $P_k$.
\end{thm}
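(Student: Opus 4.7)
The plan has two steps: first establish the case $j=0$ by gluing together the results of the previous two sections, then extend to arbitrary $j$ via translation invariance of the recurrence \eqref{Fjkrec}.

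\emph{Step 1 (the case $j=0$).} Each order ideal $K \in J(P_k)$ decomposes uniquely as a disjoint union $K = I \sqcup J$ with $I \in J(Q_{k+1})$, $J \in J(Q_k)$; moreover, $I \cup J$ being an order ideal of $P_k = Q_{k+1} \cup Q_k$ is exactly the compatibility condition defined in Section 5. Since the weight $\wt(\cdot) = \prod y_{3r+s}$ is multiplicative on disjoint unions, $\wt(K) = \wt(I)\wt(J)$. Applying Proposition \ref{propASMIdeals} once to $Q_{k+1}$ and once to $Q_k$ gives $\wt(I) = X_{k+1}^A$ and $\wt(J) = X_k^B$, where $(A,B) \in ASM(k+1) \times ASM(k)$ is the compatible pair of matrices corresponding to $(I,J)$ under the EKLP bijection. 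Summing over all order ideals of $P_k$ and invoking Proposition \ref{propFASM}:
\begin{displaymath}
\sum_{K \in J(P_k)} \wt(K) \;=\; \sum_{(A,B)\text{ compatible}} X_{k+1}^A \, X_k^B \;=\; F_{0,k}.
\end{displaymath}

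\emph{Step 2 (general $j$).} I would show by induction on $k$ that $F_{j,k}(\mathbf{y}) = F_{0,k}(\sigma^j\mathbf{y})$, where $\sigma$ denotes the shift $y_i \mapsto y_{i+1}$ acting on Laurent polynomials. The base cases $k=-1,0$ are trivial because $F_{j,-1}=F_{j,0}=1$. For the inductive step, observe that the recurrence \eqref{Fjkrec} is manifestly equivariant under the simultaneous shift $j \mapsto j+1$, $y_i \mapsto y_{i+1}$ (the monomial $\prod_{i=-k}^k y_{j+3i}$ transforms correctly), so both sides satisfy the same recursion and match at the base. Applying Step 1 to the shifted arguments then gives
\begin{displaymath}
F_{j,k}(\mathbf{y}) \;=\; F_{0,k}(\sigma^j\mathbf{y}) \;=\; \sum_{I \in J(P_k)} \prod_{(r,s,t)\in I} y_{3r+s+j},
\end{displaymath}
which is the desired formula.

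The main potential obstacle is the bookkeeping in Step 1, namely confirming that the notion of ``compatible ASMs'' from Proposition \ref{propFASM} matches the notion of ``compatible order ideals'' (equivalently, order ideals of $P_k$) under the EKLP bijection. This compatibility is built into the definitions of Section 5: matrices were \emph{defined} to be compatible precisely when the associated ideals are, so no additional work is required beyond recognizing this identification. Everything else is routine combinatorial bookkeeping combined with the translation-invariance observation.
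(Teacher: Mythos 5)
Your proposal is correct and follows essentially the same route as the paper: reduce to $j=0$ by shift-equivariance of the recurrence, then combine Proposition \ref{propFASM}, Proposition \ref{propASMIdeals}, and the unique decomposition of an order ideal of $P_k$ into a compatible pair of ideals of $Q_{k+1}$ and $Q_k$. The only difference is expository (you run the chain of equalities in the opposite direction and spell out the translation-invariance induction that the paper states in one sentence).
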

\begin{proof}
The effect on either side of this equation of changing $j$ is to shift the index of each $y$-variable.  As such, it suffices to verify the formula for $F_{0,k}$.  By Proposition~ \ref{propFASM},
\begin{displaymath}
F_{0,k} = \sum_{A,B} X_{k+1}^A X_k^B
\end{displaymath}
where the sum is over compatible pairs $A \in ASM(k+1), B \in ASM(k)$.  Such pairs are in bijection with compatible pairs of order ideals $I_1 \subseteq Q_{k+1}, I_2 \subseteq Q_k$, so by Proposition~ \ref{propASMIdeals}
\begin{displaymath}
F_{0,k} = \sum_{I_1,I_2} \wt(I_1)\wt(I_2).
\end{displaymath}
Every order ideal $I$ of $P_k$ is uniquely the union of such a compatible pair, namely $I_1 = I \cap Q_{k+1}$ and $I_2 = I \cap Q_k$.  Since $I$ is a disjoint union of $I_1$ and $I_2$, it follows that $\wt(I) = \wt(I_1)\wt(I_2)$.  Therefore,
\begin{displaymath}
F_{0,k} = \sum_{I \in J(P_k)} \wt(I) 
        = \sum_{I \in J(P_k)} \prod_{(r,s,t)\in I} y_{3r+s}. \qedhere
\end{displaymath}
\end{proof}

\section{Axis-aligned polygons}
The polynomial $F_{0,k}$ takes an interesting form under the specialization $y_j=-1$ for all $j \equiv k \pmod{2}$.  Specifically, there is a $(k+1) \times (k+1)$ matrix whose entries are monomials in the other $y_j$ and whose determinant equals $F_{0,k}$ in this case.  This specialization is of geometric interest because it arises for axis-aligned polygons.

Let $(f_{i,j,k})$ be a solution to the octahedron recurrence and let $\tilde{f}_{i,j,k} = \sigma_j f_{i,j,k}$ where
\begin{displaymath}
\sigma_j = \begin{cases}
1, &  j \equiv 0,3 \pmod{4} \\
-1, & j \equiv 1,2 \pmod{4} 
\end{cases}
\end{displaymath}
The $\tilde{f}_{i,j,k}$ satisfy a slightly different recurrence.  Indeed
\begin{align*}
\tilde{f}_{i,j,k+1}\tilde{f}_{i,j,k-1} &= f_{i,j,k+1}f_{i,j,k-1} \\
                                       &= f_{i-1,j,k}f_{i+1,j,k} + f_{i,j-1,k}f_{i,j+1,k} \\
                                       &= \tilde{f}_{i-1,j,k}\tilde{f}_{i+1,j,k} - \tilde{f}_{i,j-1,k}\tilde{f}_{i,j+1,k}
\end{align*}
since $\sigma_j^2 = 1$ and $\sigma_{j-1}\sigma_{j+1} = -1$ for all $j$.  

This recurrence is the one used in Dodgson's method of computing determinants.  More specifically, recall that $f_{0,0,k}$ can be expressed in terms of the initial conditions given in the matrices $X_{k+1}$ and $Y_k$.  Similarly, $\tilde{f}_{0,0,k}$ can be expressed in terms of matrices $\tilde{X}_{k+1}$ and $\tilde{Y}_k$ in which the $f$'s have been replaced by $\tilde{f}$'s.  According to Dodgson's condensation, if $\tilde{Y}_k$ has all its entries equal to 1 then $\tilde{f}_{0,0,k} = \det(\tilde{X}_{k+1}$).

\begin{prop} \label{propF0k}
If $y_j = -1$ for all $j \equiv k \pmod{2}$ then
\begin{displaymath}
F_{0,k} = \det \left[ \begin{array}{cccc}
            \tilde{m}_{-k,0,0} & \tilde{m}_{-k+1,1,0} & \cdots & \tilde{m}_{0,k,0} \\
            \tilde{m}_{-k+1,-1,0} & \tilde{m}_{-k+2,0,0} & \cdots & \tilde{m}_{1,k-1,0} \\
            \vdots & \vdots & & \vdots \\
            \tilde{m}_{0,-k,0} & \tilde{m}_{1,-k+1,0} & \cdots & \tilde{m}_{k,0,0} \\
\end{array}\right]
\end{displaymath}
where $\tilde{m}_{i,j,0} = \sigma_jm_{i,j,0}$.
\end{prop}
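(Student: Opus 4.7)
The plan is to apply Dodgson's condensation, as summarized in the paragraphs preceding the proposition, to the sequence $\tilde f_{i,j,k}=\sigma_j f_{i,j,k}=\sigma_j m_{i,j,k}F_{3i+j,k}$, which by Proposition~\ref{propMFOct} and the sign calculation given above satisfies the signed octahedron recurrence. Since $m_{i,0,k}=1$ for every $i,k$ (induct on $k$ using \eqref{mijk} from $m_{i,0,0}=1$), the central value is $\tilde f_{0,0,k}=\sigma_0 F_{0,k}=F_{0,k}$, and the level-$0$ initial matrix is exactly the $\tilde X_{k+1}$ appearing in the statement. Thus it suffices to show that under the specialization $y_j=-1$ for all $j\equiv k\pmod 2$, the level-$(-1)$ initial matrix $\tilde Y_k$ has every entry equal to $1$.

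Since $F_{3i+j,-1}=1$, the entries of $\tilde Y_k$ are $\tilde f_{i,j,-1}=\sigma_j/m_{i,j,0}$, so the reduction is to proving $m_{i,j,0}=\sigma_j$ for every $(i,j)$ indexing $Y_k$. The pairs $(i,j)$ appearing in $Y_k$ satisfy $|i|+|j|\le k-1$ and $i+j\equiv k+1\pmod 2$. Looking at the product \eqref{mij0} (and its counterpart for $j\le -2$), every $y$-variable appearing in $m_{i,j,0}$ carries an index of parity $3i+j-1\equiv i+j+1\equiv k\pmod 2$, so under the specialization each factor of $m_{i,j,0}$ becomes $-1$. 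Hence $m_{i,j,0}=(-1)^{N(j)}$, where $N(j)$ is the number of factors.

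A direct count yields $N(j)=\sum_{l=0}^{j-1}(l+1)=j(j+1)/2$ when $j\ge 0$, and after substituting $l\mapsto -l-1$ in the formula for $j\le -2$ one gets $N(j)=\sum_{l'=1}^{-j-1}l'=j(j+1)/2$ as well; for $j\in\{0,-1\}$ both sides are $0$. The triangular number $j(j+1)/2$ is odd exactly when $j\equiv 1,2\pmod 4$, which is precisely the condition that $\sigma_j=-1$. Therefore $m_{i,j,0}=\sigma_j$ under the specialization, every entry of $\tilde Y_k$ equals $1$, and Dodgson's condensation gives $F_{0,k}=\tilde f_{0,0,k}=\det(\tilde X_{k+1})$.

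The one subtle piece of bookkeeping is the sign count in the last paragraph—verifying simultaneously for $j\ge 0$ and $j\le -2$ that $N(j)\equiv j(j+1)/2\pmod 2$ and that this parity matches $\sigma_j$. Everything else is a straightforward assembly of previously established ingredients (the octahedron solution $f_{i,j,k}$ of Proposition~\ref{propMFOct}, the sign-twist observation, and Dodgson's condensation).
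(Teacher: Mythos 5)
Your proof is correct and follows essentially the same route as the paper: twist by $\sigma_j$, check that under the specialization every entry of $\tilde Y_k$ equals $1$ because $m_{i,j,0}$ becomes $(-1)^{j(j+1)/2}=\sigma_j$, and then apply Dodgson's condensation to identify $F_{0,k}=\tilde f_{0,0,k}=\det(\tilde X_{k+1})$. Your only addition is spelling out the factor count for $j\le -2$ via the product conventions, which the paper leaves implicit.
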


\begin{proof}
Consider the solution to the octahedron recurrence constructed in Section~ 6.  The entries of $Y_k$ are of the form $f_{i,j,-1}$ for $i + j - 1\equiv k \pmod{2}$.  Recall that $f_{i,j,-1} = m_{i,j,-1} = 1/m_{i,j,0}$.  However, by \eqref{mij0} we have that $m_{i,j,0}$ is equal to a product of $-1$'s since $3i+j-4l+6m-1 \equiv k \pmod{2}$.  The number of terms in this product equals $\sum_{l=0}^{j-1} (l+1) = j(j+1)/2$.  Therefore, $m_{i,j,0} = \sigma_j$, so $m_{i,j,-1} = \sigma_j$.  All of the entries of $Y_k$ equal $\sigma_j$, so all the entries of $\tilde{Y}_k$ equal $\sigma_j^2 = 1$.

It follows from Dodgson's condensation that $\tilde{f}_{0,0,k} = \det(\tilde{X}_{k+1}$).  The matrix $\tilde{X}_{k+1}$ is exactly the one in the statement of this proposition, and $\tilde{f}_{0,0,k} = f_{0,0,k} = F_{0,k}$.
\end{proof}

For $k=1$, the proposition says 
\begin{displaymath}
F_{0,1} = \det \left[ \begin{array}{cc}
            m_{-1,0,0} & -m_{0,1,0} \\
            m_{0,-1,0} & m_{1,0,0} \\
\end{array}\right]
= \det \left[ \begin{array}{cc}
            1 & -y_0 \\
            1 & 1 \\
\end{array}\right] 
\end{displaymath}
which equals $1 + y_0$.  For $k=2$,
\begin{align*}
F_{0,2} &= \det \left[ \begin{array}{ccc}
            m_{-2,0,0} & -m_{-1,1,0} & -m_{0,2,0} \\
            m_{-1,-1,0} & m_{0,0,0} & -m_{1,1,0} \\
            -m_{0,-2,0} & m_{1,-1,0} & m_{2,0,0} \\
\end{array}\right] \\
&= \det \left[ \begin{array}{ccc}
            1 & -y_{-3} & -y_{-3}y_1y_3 \\
            1 & 1 & -y_3 \\
            -y_{-1} & 1 & 1 \\
\end{array}\right].
\end{align*}
This determinant agrees with the result of substituting $y_0 = -1$ into \eqref{Fj2}.

The remainder of this paper is devoted to axis-aligned polygons, i.e. polygons whose sides are alternately parallel to the $x$ and $y$ axes.  Note that such a polygon has the property that its even edges are concurrent (since they pass through a common point at infinity) and its odd edges are also concurrent.  We give a new proof of the result \cite{S} that the pentagram map takes an axis-aligned polygon to one which satisfies the dual condition (its even vertices are collinear and its odd vertices are also collinear) after a certain number of steps.

\begin{lem} \label{lemyDeg}
Let $A$ be a twisted polygon indexed either by $\mathbb{Z}$ or $\frac{1}{2} + \mathbb{Z}$.  Suppose that no 3 consecutive points of $A$ are collinear.  Then for each index $i$ of $A$:
\begin{enumerate}
\item $A_{i-2},A_i,A_{i+2}$ are collinear if and only if $y_{2i}(A) = -1$.
\item $\overleftrightarrow{A_{i-2}A_{i-1}},\overleftrightarrow{A_iA_{i+1}},\overleftrightarrow{A_{i+2}A_{i+3}}$ are concurrent if and only if $y_{2i+1}(A) = -1$.
\end{enumerate}
\end{lem}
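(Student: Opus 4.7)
The plan is to reduce both parts to a simple algebraic characterization of when a classical cross ratio equals $1$. A direct calculation gives
\[\chi(a,b,c,d) = \frac{(a-b)(c-d)}{(a-c)(b-d)} = 1 \iff a = d \ \text{or}\ b = c,\]
and this will be the key tool. The hypothesis that no three consecutive points of $A$ are collinear is exactly what rules out the ``wrong'' coincidences in each case.

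For part (1), the definition \eqref{yvertex} makes $y_{2i}(A) = -1$ equivalent to
\[\chi(\overleftrightarrow{A_iA_{i-2}}, \overleftrightarrow{A_iA_{i-1}}, \overleftrightarrow{A_iA_{i+1}}, \overleftrightarrow{A_iA_{i+2}}) = 1.\]
The non-collinearity of $A_{i-1}, A_i, A_{i+1}$ yields $\overleftrightarrow{A_iA_{i-1}} \neq \overleftrightarrow{A_iA_{i+1}}$, so by the characterization above the cross ratio equals $1$ precisely when $\overleftrightarrow{A_iA_{i-2}} = \overleftrightarrow{A_iA_{i+2}}$, which is the same as saying that $A_{i-2}, A_i, A_{i+2}$ are collinear.

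For part (2), definition \eqref{yedge} turns $y_{2i+1}(A) = -1$ into $\chi(B, A_i, A_{i+1}, C) = 1$, where $B = \overleftrightarrow{A_{i-2}A_{i-1}} \cap L$, $C = \overleftrightarrow{A_{i+2}A_{i+3}} \cap L$, and $L = \overleftrightarrow{A_iA_{i+1}}$. Since $A_i \neq A_{i+1}$, the cross-ratio characterization gives that this holds iff $B = C$. A common point $B = C$ would then lie simultaneously on all three of $\overleftrightarrow{A_{i-2}A_{i-1}}$, $\overleftrightarrow{A_iA_{i+1}}$, and $\overleftrightarrow{A_{i+2}A_{i+3}}$, which is exactly the concurrence asserted.

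I do not expect any substantive obstacle. The only point requiring care is that the four points (or lines) appearing in each cross ratio are sufficiently distinct for the identity $\chi = 1 \iff a=d$ or $b=c$ to apply; for instance one must confirm $B \neq A_i$, but $B = A_i$ would force $A_{i-2}, A_{i-1}, A_i$ to be collinear, contradicting the hypothesis. In short, the ``no three consecutive collinear'' assumption provides precisely the genericity needed to make the reduction to the cross-ratio identity entirely routine.
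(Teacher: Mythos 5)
Your proof is correct and follows essentially the same route as the paper: reduce each statement to the identity $\chi(a,b,c,d)=1$ iff $a=d$ or $b=c$, and use the no-three-consecutive-collinear hypothesis to rule out the unwanted coincidence ($\overleftrightarrow{A_iA_{i-1}}=\overleftrightarrow{A_iA_{i+1}}$ in part (1), $A_i=A_{i+1}$ in part (2)). The paper proves part (1) exactly this way and declares part (2) ``similar,'' which you simply spell out.
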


\begin{proof}
By \eqref{yvertex}, we have $y_{2i}(A) = -1$ if and only if 
\begin{displaymath}
\chi (\overleftrightarrow{A_iA_{i-2}}, \overleftrightarrow{A_iA_{i-1}}, \overleftrightarrow{A_iA_{i+1}}, \overleftrightarrow{A_iA_{i+2}}) = 1.
\end{displaymath}
A cross ratio $\chi(a,b,c,d)$ equals 1 if and only if $a=d$ or $b=c$.  By assumption, $A_{i-1}, A_i, A_{i+1}$ are not collinear so  $\overleftrightarrow{A_iA_{i-1}} \neq \overleftrightarrow{A_iA_{i+1}}$.  Therefore $y_{2i}(A) = -1$, if and only if $\overleftrightarrow{A_iA_{i-2}} = \overleftrightarrow{A_iA_{i+2}}$, i.e. $A_{i-2},A_i,A_{i+2}$ are collinear.  The proof of the second statement is similar.
\end{proof}

Let $A \in \mathcal{P}_{2n}$ be an axis-aligned polygon.  Suppose in addition that $A$ is closed, i.e. $A_{i+2n} = A_i$ for all $i \in \mathbb{Z}$.  Let $s_{2j+1}$ denote the signed length of the side joining $A_j$ and $A_{j+1}$, where the sign is taken to be positive if and only if $A_{j+1}$ is to the right of or above $A_j$.  An example of an axis-aligned octagon is given in Figure \ref{figureOctogon}.  It follows from the second statement in Lemma~ \ref{lemyDeg} that $y_{2j+1}(A) = -1$ for all $j \in \mathbb{Z}$.  On the other hand, the even $y$-parameters can be expressed directly in terms of the side lengths.

\begin{figure} 
\begin{pspicture}(7,5)
  \pspolygon[showpoints=true](1,1)(4,1)(4,2)(6,2)(6,4)(3,4)(3,3)(1,3)
  \uput[dl](1,1){$A_1$}
  \uput[dr](4,1){$A_2$}
  \uput[dr](4,2){$A_3$}
  \uput[dr](6,2){$A_4$}
  \uput[ur](6,4){$A_5$}
  \uput[ul](3,4){$A_6$}
  \uput[ul](3,3){$A_7$}
  \uput[ul](1,3){$A_8$}
	\psset{labelsep=2pt}
  \uput[r](1,2){$s_1$}
  \uput[u](2.5,1){$s_3$}
  \uput[l](4,1.5){$s_5$}
  \uput[u](5,2){$s_7$}
  \uput[l](6,3){$s_9$}
  \uput[d](4.5,4){$s_{11}$}
  \uput[r](3,3.5){$s_{13}$}
  \uput[d](2,3){$s_{15}$}
  \psset{labelsep=5pt}
\end{pspicture}
\caption{An axis-aligned octagon.  The side lengths $s_3,s_5,s_7$, and $s_9$ are positive and the others are negative.} \label{figureOctogon}
\end{figure}
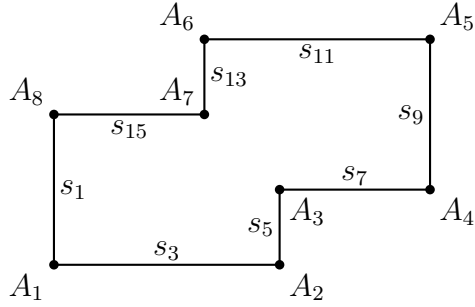

\begin{lem} \label{lemyEvenDeg}
For all $j \in \mathbb{Z}$ 
\begin{displaymath}
y_{2j}(A) = -\frac{s_{2j-1}s_{2j+1}}{s_{2j-3}s_{2j+3}}.
\end{displaymath}
\end{lem}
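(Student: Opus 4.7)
The plan is to evaluate the cross ratio in the definition \eqref{yvertex} of $y_{2j}(A)$ directly, using the standard fact that the cross ratio of four concurrent lines equals the cross ratio of their slopes (allowing a vertical line to have slope $\infty$). I place $A_j$ at the origin and use the axis-aligned hypothesis to read off the coordinates of $A_{j\pm 1}$ and $A_{j\pm 2}$ in terms of the signed side lengths.

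By symmetry, I can treat the two parity cases uniformly; let me describe the case in which the edge $(A_{j-1},A_j)$ is vertical and $(A_j,A_{j+1})$ is horizontal. Then, directly from the definition of $s_{2j\pm 1}$ and $s_{2j\pm 3}$ together with the fact that the adjacent edges alternate direction,
\[
A_{j-1} = (0,-s_{2j-1}),\quad A_{j+1} = (s_{2j+1},0),\quad A_{j-2} = (-s_{2j-3},-s_{2j-1}),\quad A_{j+2} = (s_{2j+1},s_{2j+3}).
\]
Consequently the four lines $\overleftrightarrow{A_jA_{j-2}}$, $\overleftrightarrow{A_jA_{j-1}}$, $\overleftrightarrow{A_jA_{j+1}}$, $\overleftrightarrow{A_jA_{j+2}}$ have slopes $s_{2j-1}/s_{2j-3}$, $\infty$, $0$, $s_{2j+3}/s_{2j+1}$ respectively.

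A short computation of the cross ratio, exploiting the two arguments $0$ and $\infty$, gives
\[
\chi\!\left(\tfrac{s_{2j-1}}{s_{2j-3}},\,\infty,\,0,\,\tfrac{s_{2j+3}}{s_{2j+1}}\right) = \frac{s_{2j-3}s_{2j+3}}{s_{2j-1}s_{2j+1}},
\]
so that by \eqref{yvertex} one has $y_{2j}(A) = -\chi^{-1} = -\dfrac{s_{2j-1}s_{2j+1}}{s_{2j-3}s_{2j+3}}$, as desired. The other parity case (where the roles of horizontal and vertical are swapped) produces exactly the same formula, as the slopes $0$ and $\infty$ are merely permuted among the four lines and an analogous cross ratio computation yields the same ratio.

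The argument is essentially a coordinate computation, so there is no real obstacle; the only point requiring minor care is keeping the signs of the $s_{2i+1}$ straight, but since the formula is a ratio of products of these signed lengths in the correct pattern, the sign conventions propagate through cleanly in both parity cases.
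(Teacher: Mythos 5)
Your proof is correct and follows essentially the same route as the paper: both compute the cross ratio in \eqref{yvertex} as the cross ratio of the four slopes $s_{2j-1}/s_{2j-3}$, $\infty$, $0$, $s_{2j+3}/s_{2j+1}$ and then invert and negate, with the opposite parity case handled symmetrically. Your explicit placement of $A_j$ at the origin with coordinates for $A_{j\pm1},A_{j\pm2}$ is just a slightly more detailed justification of the same slope computation the paper performs.
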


\begin{proof}
By \eqref{yvertex}
\begin{displaymath}
y_{2j}(A) = -\left(\chi (\overleftrightarrow{A_jA_{j-2}}, \overleftrightarrow{A_jA_{j-1}}, \overleftrightarrow{A_jA_{j+1}}, \overleftrightarrow{A_jA_{j+2}})\right)^{-1}.
\end{displaymath}
A cross ratio of 4 lines can be calculated as the cross ratio of the 4 corresponding slopes.  Suppose first that $\overleftrightarrow{A_jA_{j-1}}$ is vertical (slope = $\infty$) and $\overleftrightarrow{A_jA_{j+1}}$ is horizontal (slope = 0).  Then the slope of $\overleftrightarrow{A_jA_{j-2}}$ is $s_{2j-1}/s_{2j-3}$ and the slope of $\overleftrightarrow{A_jA_{j+2}}$ is $s_{2j+3}/s_{2j+1}$.  Therefore
\begin{align*}
y_{2j}(A) &= -\left(\chi\left(\frac{s_{2j-1}}{s_{2j-3}}, \infty,0, \frac{s_{2j+3}}{s_{2j+1}}\right)\right)^{-1} \\
          &= -\left(\frac{(-\infty)\left(-\frac{s_{2j+3}}{s_{2j+1}}\right)} {\left(\frac{s_{2j-1}}{s_{2j-3}}\right)(\infty)}\right)^{-1} \\
          &= -\frac{s_{2j-1}s_{2j+1}}{s_{2j-3}s_{2j+3}}
\end{align*}
as desired.  The calculation is similar if $\overleftrightarrow{A_jA_{j-1}}$ is horizontal and $\overleftrightarrow{A_jA_{j+1}}$ is vertical.
\end{proof}

\begin{prop} \label{propFeq0}
Let $A \in \mathcal{P}_{2n}$ be closed and axis-aligned, and let $y_j = y_j(A)$ for all ~ $j$.  If $n$ is even then $F_{0,n-1}(y) = 0$.
\end{prop}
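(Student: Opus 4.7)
The plan is to derive this identity from Proposition~\ref{propF0k}. First observe that an axis-aligned polygon $A$ automatically satisfies $y_j(A) = -1$ for every odd $j$: the three sides $\overleftrightarrow{A_{i-2}A_{i-1}}$, $\overleftrightarrow{A_iA_{i+1}}$, $\overleftrightarrow{A_{i+2}A_{i+3}}$ all share a parity as indices, hence the same orientation (all horizontal or all vertical), and thus meet at a common point at infinity. Lemma~\ref{lemyDeg}(2) (which applies because consecutive sides of an axis-aligned polygon meet at a right angle, so no three consecutive vertices are collinear) then forces $y_{2i+1}(A) = -1$. Since $n$ is even, $k = n-1$ is odd, and Proposition~\ref{propF0k} applies, giving
\begin{displaymath}
F_{0,\,n-1}(y) = \det \tilde X_n,
\end{displaymath}
an $n \times n$ determinant whose entries are the signed monomials $\tilde m_{i,j,0}$ in the even-indexed $y$-parameters.

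Next I would invoke Lemma~\ref{lemyEvenDeg} to substitute $y_{2j} = -s_{2j-1}s_{2j+1}/(s_{2j-3}s_{2j+3})$, expressing each entry of $\tilde X_n$ as a signed rational monomial in the side lengths $s_1, s_3, \ldots, s_{4n-1}$. After rescaling each column of $\tilde X_n$ by an appropriate monomial in the $s_j$'s (an operation that alters $\det \tilde X_n$ by a nonzero factor and thus does not affect its vanishing), the matrix becomes polynomial, and the proposition reduces to the claim that this polynomial determinant vanishes on the affine subvariety cut out by the closure relations
\begin{displaymath}
\sum_{j \text{ even}} s_{2j+1} = 0, \qquad \sum_{j \text{ odd}} s_{2j+1} = 0
\end{displaymath}
of the closed axis-aligned $2n$-gon.

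To establish the vanishing, I would exhibit an explicit nontrivial polynomial column combination that is identically zero modulo the closure ideal. I expect the coefficients to be monomials in the $s_j$'s chosen so that each row of the rescaled $\tilde X_n$ collapses, under the combination, to a scalar multiple of either $\sum_{j \text{ even}} s_{2j+1}$ or $\sum_{j \text{ odd}} s_{2j+1}$. The hypothesis that $n$ is even is critical here: precisely in that case do the $2n$ side lengths partition into two parity classes of size $n$ each, matching the number of columns of $\tilde X_n$ and making such a telescoping combination possible. The main obstacle is producing the combination explicitly, which amounts to a careful bookkeeping of which $s_j$'s appear in each monomial $m_{i,j,0}$ of \eqref{mij0}; once identified, its verification is a direct row-by-row computation.
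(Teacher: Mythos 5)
Your reduction follows the same route as the paper up to the decisive point: you correctly get $y_j=-1$ for odd $j$ from Lemma~\ref{lemyDeg}(2), invoke Proposition~\ref{propF0k} (using $n$ even so that $k=n-1$ is odd) to write $F_{0,n-1}=\det\tilde X_n$ with entries $\tilde m_{i,j,0}$, and plan to feed in Lemma~\ref{lemyEvenDeg} and the closure relations $\sum s_{\mathrm{horiz}}=\sum s_{\mathrm{vert}}=0$. However, the step that actually proves the proposition --- exhibiting a linear dependence among the columns --- is left as an expectation (``the main obstacle is producing the combination explicitly''), and nothing in your sketch guarantees that such a combination exists: a priori the monomial coefficients needed to collapse row $i$ to a closure relation could vary with $i$, since the entries $m_{i,j,0}$ of \eqref{mij0} are complicated monomials, and ``careful bookkeeping'' is not by itself an argument that a single set of column coefficients works for every row simultaneously. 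That uniformity is exactly the nontrivial content, and it is the idea missing from the proposal.

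The paper supplies it via Lemma~\ref{lemmRec}: with $x_{ij}=\tilde m_{-n-1+i+j,\,j-i,0}$ one computes $\frac{x_{i,j}x_{i+1,j+1}}{x_{i+1,j}x_{i,j+1}}=-1/y_{-3n+2i+4j}$, and since $n$ is even this index is even, so Lemma~\ref{lemyEvenDeg} turns the ratio into $\frac{z_{i,j}z_{i+1,j+1}}{z_{i+1,j}z_{i,j+1}}$ with $z_{ij}=s_{-3n+2i+4j-3}$. Hence the entrywise quotient $x_{ij}/z_{ij}$ has all consecutive $2\times2$ minors zero and (generically) has rank one, i.e.\ $x_{ij}=\lambda_i\mu_j z_{ij}$; the matrix $Z$ is singular because each of its rows lists the $n$ signed horizontal (or vertical) side lengths of a closed polygon and so sums to zero, and $X$ is obtained from $Z$ by row and column rescaling. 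This rank-one factorization is precisely what produces your hoped-for uniform column combination (take coefficients $1/\mu_j$), so your plan can be completed, but as written it has a genuine gap at its core. Note also that your stated reason why $n$ even is critical (two parity classes of size $n$ matching the number of columns) is not the real mechanism --- a closed axis-aligned $2n$-gon has $n$ horizontal and $n$ vertical sides for every $n$; the parity of $n$ matters because it makes the indices $-3n+2i+4j$ even, so that Lemma~\ref{lemyEvenDeg} applies, and because it puts the hypothesis of Proposition~\ref{propF0k} in the form delivered by axis-alignment.
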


\begin{proof}
By Lemma~ \ref{lemyDeg}, $y_j = -1$ for all odd $j$, that is for all $j \equiv n-1 \pmod{2}$.  So by Proposition~ \ref{propF0k} we have
\begin{displaymath}
F_{0,n-1} = \det \left[ \begin{array}{cccc}
            \tilde{m}_{-n+1,0,0} & \tilde{m}_{-n+2,1,0} & \cdots & \tilde{m}_{0,n-1,0} \\
            \tilde{m}_{-n+2,-1,0} & \tilde{m}_{-n+3,0,0} & \cdots & \tilde{m}_{1,n-2,0} \\
            \vdots & \vdots & & \vdots \\
            \tilde{m}_{0,-n+1,0} & \tilde{m}_{1,-n+2,0} & \cdots & \tilde{m}_{n-1,0,0} \\
         \end{array} \right].
\end{displaymath}
We want to show that this matrix, call it $X$, is degenerate.  Let $x_{ij}$ be the $i,j$ entry of $X$, that is $x_{ij} = \tilde{m}_{-n-1+i+j,j-i,0}$.  Then 
\begin{align*}
\frac{x_{i,j}x_{i+1,j+1}}{x_{i+1,j}x_{i,j+1}}
    &= \frac{\tilde{m}_{-n-1+i+j,j-i,0}\tilde{m}_{-n+1+i+j,j-i,0}}{\tilde{m}_{-n+i+j,j-i-1,0}\tilde{m}_{-n-1+i+j,j-i+1,0}} \\
    &= -\frac{m_{-n-1+i+j,j-i,0}m_{-n+1+i+j,j-i,0}}{m_{-n+i+j,j-i-1,0}m_{-n-1+i+j,j-i+1,0}} \\
    &= -\frac{1}{y_{3(-n+i+j) + j-i}} = -\frac{1}{y_{-3n + 2i + 4j}}
\end{align*}
by Lemma~ \ref{lemmRec}.
Since $n$ is even, $-3n + 2i + 4j$ is even so
\begin{align*}
\frac{x_{i,j}x_{i+1,j+1}}{x_{i+1,j}x_{i,j+1}} &= \frac{s_{-3n+2i+4j-3}s_{-3n+2i+4j+3}}{s_{-3n+2i+4j-1}s_{-3n+2i+4j+1}} \\
                                              &= \frac{z_{i,j}z_{i+1,j+1}}{z_{i+1,j}z_{i,j+1}}
\end{align*}
where $z_{ij} = s_{-3n+2i+4j-3}$.  Therefore
\begin{displaymath}
\frac{x_{i,j}}{z_{i,j}}\frac{x_{i+1,j+1}}{z_{i+1,j+1}} - \frac{x_{i+1,j}}{z_{i+1,j}}\frac{x_{i,j+1}}{z_{i,j+1}} = 0.
\end{displaymath}

The matrix with entries $x_{ij} / z_{ij}$ has consecutive $2\times 2$ minors equal to 0.  The entries of this matrix are generically non-zero, so it follows that all of its $2 \times 2$ minors vanish.  Therefore, the matrix has rank 1 and there exist non-zero scalars $\lambda_1,\ldots, \lambda_n$ and $\mu_1,\ldots, \mu_n$ such that $x_{ij}/z_{ij} = \lambda_i\mu_j$.

Now the matrix $Z$ whose entries are $z_{ij}$ is degenerate because its rows all have sum 0.  Indeed for each fixed $i$
\begin{displaymath}
\sum_{j=1}^n z_{ij} = \sum_{j=1}^n s_{-3n+2i+4j-3} = 0
\end{displaymath}
because $A$ is closed so the sum of the lengths of its $n$ horizontal (or vertical) sides must be 0.  However, $X$ can be obtained from $Z$ by multiplying its rows by the $\lambda_i$ and its columns by the $\mu_j$.  Therefore, $X$ is degenerate as well.
\end{proof}

\begin{cor}
Let $y_j = y_j(A)$ for $A$ as above.  Then $F_{j,n-1}(y) = 0$ for all $j \equiv n \pmod{2}$.
\end{cor}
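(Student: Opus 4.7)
The strategy is to reduce to the already-proven case $j=0$ by reindexing the polygon, exploiting the fact that both $F_{j,n-1}$ and the $y$-parameters behave predictably under an index shift. Since $n$ is even by the hypothesis of Proposition~\ref{propFeq0}, the condition $j \equiv n \pmod{2}$ forces $j$ to be even as well; write $j = 2m$ with $m \in \mathbb{Z}$.

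Define a new sequence $A' = (A'_i)_{i \in \mathbb{Z}}$ by $A'_i = A_{i+m}$. This is merely a reindexing of the same underlying geometric object, so $A'$ is again a closed axis-aligned $2n$-gon in $\mathcal{P}_{2n}$. Directly from the definitions \eqref{yvertex}--\eqref{yedge}, the four points (respectively, lines) used to compute $y_i(A')$ coincide with those used to compute $y_{i+2m}(A)$, so $y_i(A') = y_{i+j}(A)$ for every $i \in \mathbb{Z}$.

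On the other hand, the explicit formula \eqref{Fjk} shows that $F_{j,k}$ is obtained from $F_{0,k}$ by the index shift $y_\ell \mapsto y_{\ell+j}$. Combining these two observations,
$$F_{j,n-1}\bigl(y_1(A), \ldots, y_{2n}(A)\bigr) = F_{0,n-1}\bigl(y_1(A'), \ldots, y_{2n}(A')\bigr),$$
and Proposition~\ref{propFeq0} applied to $A'$ shows the right-hand side vanishes.

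There is no real obstacle to this argument; it is a routine use of translation invariance. The one point that requires attention is that $m$ must be an integer, so that $A'$ remains indexed by $\mathbb{Z}$ rather than being shifted into $\tfrac{1}{2} + \mathbb{Z}$. This is exactly what the hypothesis $j \equiv n \equiv 0 \pmod{2}$ provides, and is also why the corresponding statement for $j$ odd does not follow by this simple reindexing argument.
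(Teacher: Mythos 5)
Your reindexing argument is correct as far as it goes, and for $n$ even it is exactly the paper's argument: a cyclic shift of the vertex labels by $m$ shifts the $y$-parameters by $j=2m$, and since $F_{j,k}$ is the image of $F_{0,k}$ under $y_\ell \mapsto y_{\ell+j}$, Proposition~\ref{propFeq0} applied to the relabeled polygon gives $F_{j,n-1}=0$.

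The gap is that you have quietly restricted the corollary to $n$ even. The statement ``for $A$ as above'' refers only to $A\in\mathcal{P}_{2n}$ closed and axis-aligned; the parity hypothesis on $n$ belongs to Proposition~\ref{propFeq0}, not to the corollary, and the corollary is needed for both parities (Theorem~\ref{thmDeg} invokes $F_{j,n-1}=0$ for $j\equiv n\pmod 2$ for arbitrary $n$, and treats $n$ odd separately at its end). When $n$ is odd the claim is that $F_{j,n-1}=0$ for all \emph{odd} $j$, and, as you yourself note, this cannot be reached from $F_{0,n-1}$ by an integer relabeling; worse, there is no base case to shift from, since Proposition~\ref{propFeq0} is only proved for $n$ even. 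The paper closes this case by a separate argument: shift all $y$-variables up by $1$ in Proposition~\ref{propF0k}, so that the hypothesis becomes $y_j=-1$ for $j-1\equiv k\pmod 2$, which for $k=n-1$ with $n$ odd is exactly the axis-aligned condition $y_j=-1$ for $j$ odd; this expresses $F_{1,n-1}$ as the determinant of the same matrix $X$ as in the proof of Proposition~\ref{propFeq0} with indices shifted by $1$, and the same rank-one/row-sum argument using Lemma~\ref{lemyEvenDeg} and closedness shows it is degenerate, giving $F_{1,n-1}=0$; even shifts then give $F_{j,n-1}=0$ for all odd $j$. Without some version of this, your proof covers only half of the statement.
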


\begin{proof}
Suppose first that $n$ is even.  Then by Proposition~ \ref{propFeq0} we have $F_{0,n-1} = 0$.  Cyclically permuting the vertex indexing has the effect of shifting the $y$-variables, and hence the $F$-polynomials, by an even offset.  So $F_{j,n-1} = 0$ for all even $j$.

Now suppose $n$ is odd.  Shifting all of the $y$-variables up by 1 in the statement of Proposition~ \ref{propF0k} yields that if $y_j = -1$ for all $j-1 \equiv k \pmod{2}$ then $F_{1,k}$ is given by the determinant of a matrix.  Since $A$ is axis-aligned, $y_j = -1$ for all odd $j$, that is for all $j-1 \equiv n-1 \pmod{2}$.  Therefore, $F_{1,n-1}$ is the determinant of some matrix $X$ which is exactly like the matrix $X$ in the proof of Proposition~ \ref{propFeq0}, except that the $y$-variables have all been shifted by 1.  The same proof shows that this matrix is degenerate so $F_{1,n-1}=0$.  Permuting the vertices yields that $F_{j,n-1} = 0$ for all odd ~ $j$. 
\end{proof}

\begin{thm}[Schwartz] \label{thmDeg}
Let $A$ be a closed, axis-aligned $2n$-gon.  Then the odd vertices of $T^{n-2}(A)$ are collinear, as are its even vertices.
\end{thm}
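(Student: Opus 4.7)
The plan is to translate the collinearity statement into an identity on the $y$-parameters of $T^{n-2}(A)$, then verify that identity using Theorem~\ref{thmTk} together with the vanishing of $F_{j,n-1}$ established in the corollary above.

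First I reduce the geometry to $y$-parameters. By Lemma~\ref{lemyDeg}(1), three consecutive same-parity vertices $A_{i-2}, A_i, A_{i+2}$ of a polygon are collinear if and only if $y_{2i}$ equals $-1$. Chaining such overlapping collinear triples shows that the odd vertices of $T^{n-2}(A)$ all lie on one line and the even vertices all lie on another, provided $y_j(T^{n-2}(A)) = -1$ holds for every $j$ with $j/2$ an index of $T^{n-2}(A)$. Since $T^{n-2}(A)$ is indexed by $\mathbb{Z}$ when $n$ is even and by $\frac{1}{2}+\mathbb{Z}$ when $n$ is odd, the relevant set of indices is exactly $\{j : j \equiv n \pmod{2}\}$ in either case.

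For any such $j$, the sum $j + (n-2)$ is even, so the first case of Theorem~\ref{thmTk} yields
\[
y_j(T^{n-2}(A)) = M_{j,n-2}\,\frac{F_{j-1,n-2}F_{j+1,n-2}}{F_{j-3,n-2}F_{j+3,n-2}}.
\]
Next I invoke the recurrence~\eqref{Fjkrec} at $k = n-2$, namely
\[
F_{j,n-1}F_{j,n-3} = F_{j-3,n-2}F_{j+3,n-2} + M_{j,n-2}F_{j-1,n-2}F_{j+1,n-2}.
\]
The corollary just preceding the theorem gives $F_{j,n-1}(y) = 0$ for $j \equiv n \pmod{2}$. Substituting this and dividing by $F_{j-3,n-2}F_{j+3,n-2}$ rearranges the displayed equation into $y_j(T^{n-2}(A)) = -1$, which is what we wanted.

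The main obstacle in this scheme is really the corollary itself, already proven above via Proposition~\ref{propFeq0}; once that input is in hand, the rest is a one-line manipulation of the $F$-polynomial recurrence. One small verification worth noting is that the denominators $F_{j\pm 3,n-2}$ carry indices $j \pm 3 \not\equiv n \pmod{2}$, so they are not forced to vanish by the corollary, and the specialization makes sense for a generic axis-aligned $2n$-gon.
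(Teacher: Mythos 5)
Your proposal is correct and follows essentially the same route as the paper's proof: both deduce $y_j(T^{n-2}(A))=-1$ for $j\equiv n\pmod 2$ by combining the vanishing $F_{j,n-1}=0$ with the recurrence \eqref{Fjkrec} and the formula \eqref{Tk}, and then conclude via Lemma~\ref{lemyDeg}(1). The only cosmetic difference is that you divide the recurrence while the paper expands $0=F_{j,n-1}F_{j,n-3}$; the content is identical.
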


\begin{proof}
Suppose without loss of generality that $A$ is indexed by $\mathbb{Z}$.  We have already shown that for such a polygon, $F_{j,n-1} = 0$ provided $j \equiv n \pmod{2}$.  Therefore,
\begin{align*}
0 &= F_{j,n-1}F_{j,n-3} \\
  &= F_{j-3,n-2}F_{j+3,n-2} + \left(\prod_{i=-n+2}^{n-2} y_{j+3i}\right)F_{j-1,n-2}F_{j+1,n-2}.
\end{align*}
Hence
\begin{displaymath}
\left(\prod_{i=-n+2}^{n-2} y_{j+3i}\right)\frac{F_{j-1,n-2}F_{j+1,n-2}}{F_{j-3,n-2}F_{j+3,n-2}} = -1.
\end{displaymath}
By \eqref{Tk} the left hand side equals $y_{j,n-2}$, the $j$th $y$-parameter of $T^{n-2}(A)$.  So $y_{j,n-2} = -1$ for all $j \equiv n \pmod{2}$.

If $n$ is even, then $T^{n-2}(A)$ is indexed by $\mathbb{Z}$ and $y_{j,n-2} = -1$ for all $j$ even.  On the other hand, if $n$ is odd, then $T^{n-2}(A)$ is indexed by $\frac{1}{2} + \mathbb{Z}$ and $y_{j,n-2} = -1$ for all $j$ odd.  In either case, it follows from the first statement of Lemma~ \ref{lemyDeg} that the odd vertices of $T^{n-2}(A)$ lie on one line and the even vertices lie on another.
\end{proof}

\begin{rem}
Theorem~ \ref{thmDeg} is stated for all $n$ in \cite{S2} and proven for $n$ even (i.e. the number of sides of $A$ divisible by 4) in \cite{S}.  Schwartz's proof in \cite{S} also involves Dodgson's condensation, so it seems as though our proof must be related to his.  However, we are not sure what the connection is at this point.
\end{rem}

\begin{rem}
Theorem~ \ref{thmDeg} is only meant to hold for polygons $A$ for which the map $T^{n-2}$ is defined.  Additionally, the application of Lemma~ \ref{lemyDeg} at the end of the proof assumes that no 3 consecutive vertices of $T^{n-2}(A)$ are collinear.  The set of $A \in \mathcal{P}_{2n}$ satisfying these properties is open, but it could, a priori, be empty.  To rule out this possibility, it suffices to find a single example which works for each $n$.   According to Schwartz, there is substantial experimental evidence to suggest that this is always possible \cite{S2}. 
\end{rem}

Suppose now that $A$ is not closed, but twisted with $A_{i+2n} = \phi(A_i)$.  Since $A$ is axis-aligned, the projective transformation $\phi$ must send vertical lines to vertical lines and horizontal lines to horizontal lines.  One can check that all such projective transformations are of the form
\begin{displaymath}
\phi(x,y) = (ax+b,cy+d)
\end{displaymath}
for some reals $a,b,c,d$.

As before, let $s_{2i+1}$ be the signed length of the side joining $A_i$ to $A_{i+1}$.  Since $A$ is not closed, the side lengths are no longer periodic.  More specifically, if $s_j$ is the length of a horizontal edge then $s_{j+4n} = as_j$, while if it is the length of a vertical edge then $s_{j+4n} = cs_j$.  If we place the additional assumption that $a=c$, then $s_{j+4n}/s_j=a=c$ for all odd $j$.  The assumption that $a=c$ means that $\phi$ preserves the slopes of lines, or put another way, that it fixes every point at infinity.  Amazingly, under this assumption the result of Theorem~ \ref{thmDeg} still holds, except that $n-1$ applications of the pentagram map are needed instead of just $n-2$.

\begin{thm} \label{thmDegTwisted}
Let $A$ be a twisted, axis-aligned $2n$-gon with $A_{i+2n} = \phi(A_{i})$ and suppose that $\phi$ fixes every point at infinity.  Then the odd vertices of $T^{n-1}(A)$ are collinear, as are its even vertices.
\end{thm}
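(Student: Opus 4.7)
The plan is to mirror the proof of Theorem~\ref{thmDeg}, but with the $F$-polynomials taken at level $n$ rather than $n-1$, which absorbs the one additional application of the pentagram map. First I would reduce the statement to $F_{j,n}=0$. By Lemma~\ref{lemyDeg}(1), collinearity of the odd (resp.\ even) vertices of $T^{n-1}(A)$ amounts to $y_j(T^{n-1}(A))=-1$ for every $j$ with $j+n-1$ even. Using \eqref{Tk} with $k=n-1$ for this parity,
\begin{displaymath}
y_j(T^{n-1}(A)) \;=\; M_{j,n-1}\,\frac{F_{j-1,n-1}F_{j+1,n-1}}{F_{j-3,n-1}F_{j+3,n-1}},
\end{displaymath}
and combining this with the recurrence \eqref{Fjkrec} at $k=n-1$ shows that the equality $y_j(T^{n-1}(A))=-1$ is implied by $F_{j,n}=0$ for all $j$ with $j+n$ odd.

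Next I would apply Proposition~\ref{propF0k} (after shifting the $y$-indices by $1$ when $n$ is even, exactly as in the corollary to Proposition~\ref{propFeq0}) to write $F_{j,n}$ as the determinant of an $(n+1)\times(n+1)$ matrix $X$ whose entries are signed monomials $\tilde m_{\bullet,\bullet,0}$. Repeating the calculation from Proposition~\ref{propFeq0} --- combining Lemma~\ref{lemmRec} on the $m$-monomials with Lemma~\ref{lemyEvenDeg} on the even $y$-parameters --- gives
\begin{displaymath}
\frac{x_{i,j}\,x_{i+1,j+1}}{x_{i+1,j}\,x_{i,j+1}} \;=\; \frac{z_{i,j}\,z_{i+1,j+1}}{z_{i+1,j}\,z_{i,j+1}},
\end{displaymath}
where $z_{i,j}=s_{c+2i+4j}$ for a suitable offset $c$. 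Hence $X=\operatorname{diag}(\lambda)\,Z\,\operatorname{diag}(\mu)$ for nonzero scalars $\lambda_i,\mu_j$, and it suffices to prove that the $(n+1)\times(n+1)$ matrix $Z$ is singular.

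The main new ingredient is the twist hypothesis. Because $\phi$ preserves axis-alignment and fixes every point at infinity, it must have the form $\phi(x,y)=(ax+b,\,ay+d)$ with a single common scalar $a$. This forces $s_{j+4n}=a\,s_j$ for every odd $j$, \emph{both} for the horizontal and for the vertical edge lengths. Reading off the $n+1$ entries in any column of $Z$ as an arithmetic progression of common difference $4$, the first and last indices differ by exactly $4n$, so the last column of $Z$ equals $a$ times the first column. Therefore $Z$ (and hence $X$) is singular, so $F_{j,n}=0$ for the desired $j$, and the reduction in the first paragraph then yields the conclusion via Lemma~\ref{lemyDeg}(1).

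The main conceptual point is the identification of the twist condition $a=c$ with the column proportionality of $Z$; once that is in place, the argument is a direct one-level shift of the proof of Theorem~\ref{thmDeg}, with the extra row and column of $Z$ exactly absorbing the twist.
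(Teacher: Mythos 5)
Your proposal is correct and takes essentially the same route as the paper: reduce to $F_{j,n}=0$ for $j\equiv n-1\pmod 2$ via \eqref{Tk} and \eqref{Fjkrec}, write $F_{0,n}$ (or the shifted $F_{1,n}$) as a determinant using Proposition~\ref{propF0k}, and use Lemma~\ref{lemmRec}, Lemma~\ref{lemyEvenDeg}, and the twist relation $s_{j+4n}=as_j$ to show the first and last columns are proportional --- the paper telescopes the entry ratios of $X$ directly while you factor through the matrix $Z$, but this is the same computation. (Minor slip: the arithmetic progression of common difference $4$ in the side-length indices runs along the rows of $Z$, not its columns, though your conclusion that the last column is $a$ times the first is exactly right.)
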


\begin{proof}
Following the proof of Theorem~ \ref{thmDeg}, it suffices to show that $F_{j,n} = 0$ for all $j \equiv n-1 \pmod{2}$.  By symmetry, it is enough to show that $F_{0,n} = 0$ if $n$ is odd and $F_{1,n} = 0$ if $n$ is even.  Suppose $n$ is odd.  Since $A$ is axis-aligned, Proposition~ \ref{propF0k} applies and
\begin{displaymath}
F_{0,n} = \det \left[ \begin{array}{cccc}
            \tilde{m}_{-n,0,0} & \tilde{m}_{-n+1,1,0} & \cdots & \tilde{m}_{0,n,0} \\
            \tilde{m}_{-n+1,-1,0} & \tilde{m}_{-n+2,0,0} & \cdots & \tilde{m}_{1,n-1,0} \\
            \vdots & \vdots & & \vdots \\
            \tilde{m}_{0,-n,0} & \tilde{m}_{1,-n+1,0} & \cdots & \tilde{m}_{n,0,0} \\
         \end{array} \right].
\end{displaymath}
Let $X$ be this matrix, that is, $x_{ij} = \tilde{m}_{-n-2+i+j,j-i,0}$.  As in the proof of Proposition~ \ref{propFeq0}, we have
\begin{displaymath}
\frac{x_{i,j}x_{i+1,j+1}}{x_{i+1,j}x_{i,j+1}} = -\frac{1}{y_{-3n + 2i + 4j-3}}
\end{displaymath}
for all $i$ and $j$.  Therefore,
\begin{displaymath}
\frac{x_{i,1}x_{i+1,n+1}}{x_{i+1,1}x_{i,n+1}} = -\frac{1}{y_{-3n+2i+1}y_{-3n+2i+5}\cdots y_{-3n+2i+4n-3}}.
\end{displaymath}

Expressing the $y$-parameters in terms of the side lengths using Lemma~ \ref{lemyEvenDeg}, the right hand side becomes a telescoping product leaving 
\begin{displaymath}
\frac{x_{i,1}x_{i+1,n+1}}{x_{i+1,1}x_{i,n+1}} = \left(\frac{s_{-3n+2i-2}}{s_{-3n+2i}}\right) \left(\frac{s_{n+2i}}{s_{n+2i-2}}\right) = 1
\end{displaymath}
since $s_{n+2i}/s_{-3n+2i} = s_{n+2i-2}/s_{-3n+2i-2}$ by the assumption on $\phi$.  Therefore $x_{i,1}/x_{i,n+1} = x_{i+1,1}/x_{i+1,n+1}$ for all $i=1,\ldots, n$.  The first and last columns of $X$ are linearly dependent, so $F_{0,n} = \det(X) = 0$ as desired.

The proof that $F_{1,n} = 0$ for $n$ even is similar.
\end{proof}

\begin{rem}
It should be possible to deduce Theorem~ \ref{thmDeg} from Theorem~ \ref{thmDegTwisted} along the following lines.  Let $A$ be a closed, axis-aligned $2n$-gon.  If the vertices of a polygon lie alternately on 2 lines, then the pentagram map collapses it to a single point (the intersection of those 2 lines).  As such, it suffices to show that $T^{n-1}(A)$ is a single point.  Approximating $A$ by twisted polygons with $\phi$ being smaller and smaller vertical translations, Theorem~ \ref{thmDegTwisted} shows that the vertices of $T^{n-1}(A)$ lie on 2 lines, $l_1$ and $l_2$.  In fact, it is easy to show that these lines must be parallel and, in this case, vertical.  Similarly, approximating $A$ by twisted polygons with smaller and smaller horizontal translations shows that the vertices of $T^{n-1}(A)$ lie on 2 horizontal lines $m_1$ and $m_2$.  Combining these, the vertices of $T^{n-1}(A)$ alternate between the points $l_1 \cap m_1$ and $l_2 \cap m_2$.  The pentagram map never collapses a polygon to a line segment of positive length, so it follows that these 2 points are equal.
\end{rem}

\end{document}